\theoremstyle{plain}
\newtheorem{lem}{Lemma}
\newtheorem{cor}{Corollary}
\newtheorem{thm}{Theorem}
 \theoremstyle{definition}
\newtheorem*{remark}{Remark}
\begin{document}
\title[Solyanik estimates and the  continuity of halo functions]{Solyanik estimates and local H\"older continuity of halo functions of geometric maximal operators}
\author{Paul Hagelstein}
\address{Department of Mathematics, Baylor University, Waco, Texas 76798}
\email{\href{mailto:paul_hagelstein@baylor.edu}{paul\!\hspace{.018in}\_\,hagelstein@baylor.edu}}
\thanks{P. H. is partially supported by a grant from the Simons Foundation (\#208831 to Paul Hagelstein).}

\author{Ioannis Parissis}
\address{Department of Mathematics, Aalto University, P. O. Box 11100, FI-00076 Aalto, Finland}
\email{\href{mailto:ioannis.parissis@gmail.com}{ioannis.parissis@gmail.com}}
\thanks{I. P. is supported by the Academy of Finland, grant 138738.}

\subjclass[2010]{Primary 42B25, Secondary: 42B35}
\keywords{maximal function, halo function, Tauberian conditions, differentiation basis}
\begin{abstract}
Let $\mathcal{B}$ be a homothecy invariant basis consisting of convex sets in $\mathbb{R}^n$, and define the associated geometric maximal operator $M_{\mathcal{B}}$ by
\[
M_{\mathcal{B}} f(x) \coloneqq \sup_{x \in R \in \mathcal{B}}\frac{1}{|R|}\int_R |f|
\]
and the halo function $\phi_{\mathcal{B}}(\alpha)$ on $(1,\infty)$ by
\[
\phi_{\mathcal B}(\alpha) \coloneqq  \sup_{E \subset \mathbb{R}^n :\, 0 < |E| < \infty}\frac{1}{|E|}|\{x\in \mathbb{R}^n : M_{\mathcal{B}} \chi_E (x) >1/\alpha\}|.
\]
 It is shown that if $\phi_{\mathcal{B}}(\alpha)$ satisfies the Solyanik estimate $\phi_{\mathcal B}(\alpha) - 1 \leq C (1 - \frac{1}{\alpha})^p$ for $\alpha\in(1,\infty)$ sufficiently close to 1 then $\phi_{\mathcal{B}}$ lies in the H\"older class $ C^p(1,\infty)$.  As a consequence we obtain that the halo functions associated with the Hardy-Littlewood maximal operator and the strong maximal operator on $\mathbb{R}^n$ lie in the H\"older class $C^{1/n}(1,\infty)$.
\end{abstract}
\maketitle
\raggedbottom

 \section{Introduction}
From the time of the seminal paper \cite{hardylittlewood} of Hardy and Littlewood, geometric maximal functions have played a central role in analysis.   For example, the Hardy-Littlewood maximal operator $M_{\text{HL}}$ has been used in a proof of the Lebesgue differentiation theorem as well as in proofs of the $L^p$ boundedness of a wide class of singular integral operators; \cite{SI} provides a well-known exposition of these facts.

 A key property that the maximal operator $M_{\text{HL}}$ satisfies is the so-called \emph{weak type (1,1) estimate}:
\[
|\{x\in \mathbb{R}^n : M_{\text{HL}}f(x) > \alpha\}| \leq \frac{C_n}{\alpha}\int_{\mathbb{R}^n} |f|,\quad \alpha\in(0,\infty).
\]   It is this property that enables us to show that the collection of cubes or balls in $\mathbb{R}^n$ differentiates $L^1(\mathbb{R}^n)$.  Now, the \emph{strong maximal operator} $M_{\operatorname S}$, defined by taking maximal averages of a function over rectangular parallelepipeds whose sides are parallel to the coordinate axes, does not satisfy a weak type $(1,1)$ condition, although it does satisfy a weak type estimate of the form
\[
|\{x \in \mathbb{R}^n : M_{\operatorname S}f(x) > \alpha\}| \leq C_n \int_{\mathbb{R}^n} \frac{|f|}{\alpha}\left(1 + \log^+ \left( \frac{|f|}{\alpha}\right)\right)^{n-1},\quad \alpha\in(0,\infty).
\]
This weak type estimate can be used to show that the collection of rectangular parallelepipeds whose sides are parallel to the axes differentiates functions that are locally in $L(\log L)^{n-1}(\mathbb{R}^n)$; details in this regard may be found in \cite{Gu}.

In order to describe more general results along these lines we introduce some terminology. A \emph{basis} $\mathcal B$ is a collection of bounded open sets in $\mathbb R^n$. A collection $\mathcal B$ is called a \emph{density basis} if it differentiates functions in $L^\infty(\mathbb R^n)$. Given a basis $\mathcal{B}$ we explicitly define the maximal operator $M_\mathcal{B}$ by
\[
M_{\mathcal{B}}f(x)\coloneqq  \sup_{x \in R \in \mathcal{B}}\frac{1}{|R|}\int_R |f|
\]
if $x\in \cup_{B\in\mathcal B} B$ while we set $M _{\mathcal B} f (x)\coloneqq 0$ otherwise. We use the special notations $M_{\operatorname{HL},b}$ when $\mathcal B$ is the collection of all Euclidean balls in $\mathbb R^n$, $M_{\operatorname{HL},c}$ when $\mathcal B$ is the basis of all cubes in $\mathbb R^n$ with sides parallel to the coordinate axes, and $M_{\operatorname S}$ when $\mathcal B$ consists of all rectangular parallelepipeds in $\mathbb R^n$ with sides parallel to the coordinate axes. If a maximal operator $M_\mathcal{B}$ associated with  a basis $\mathcal{B}$ satisfies a weak type $(\Phi, \Phi)$ estimate, with $\Phi$ being a convex non-negative, non-decreasing function in $(0,\infty)$ with $\Phi(0)=0$, the basis $\mathcal{B}$ is known to differentiate functions for which $\Phi(f)$ is locally integrable. For this reason, given a maximal operator $M_{\mathcal{B}}$, it is highly desirable to place bounds on its distribution function $|\{x \in \mathbb{R}^n : M_{\mathcal{B}}f(x) > \alpha\}|$; that enables us to establish differentiation results for the basis $\mathcal{B}$.

A somewhat weaker estimate on a maximal operator is a so-called \emph{Tauberian condition}.  A maximal operator $M_{\mathcal{B}}$ associated with a basis $\mathcal{B}$ is said to satisfy a Tauberian condition with respect to $\alpha\in(0,1)$ if there exists a constant $C >0$ such that
\[
|\{x \in \mathbb{R}^n : M_\mathcal{B} \chi_E (x) > \alpha\}| \leq C |E|
\]
holds for all measurable sets $E \subset \mathbb R ^n $.     This condition is quite useful. C\'ordoba and Fefferman related Tauberian conditions of maximal operators to $L^p$ bounds of multiplier operators in \cite{CorF}, and Hagelstein and Stokolos showed in \cite{hs} that if $\mathcal B$ is a homothecy invariant basis consisting of convex sets and the maximal operator $M_\mathcal{B}$ satisfies a Tauberian condition with respect to some $  \alpha \in(0, 1)$ then $M_{\mathcal B}$ must be bounded on $L^p(\mathbb{R}^n)$ for sufficiently large $p$.  Subsequent papers extending these ideas include \cite{hlp} and \cite{hs2}.

The \emph{halo function} $\phi_{\mathcal{B}}$ associated with a density basis $\mathcal{B}$ is defined as
\[
\phi_{\mathcal{B}}(\alpha) \coloneqq  \sup_{E :\, 0 < |E| < \infty} \frac{1}{|E|}|\{x \in \mathbb{R}^n : M_\mathcal{B} \chi_E (x) > \frac{1}{\alpha}\}|
\]
for $\alpha\in (1,\infty)$, and by convention it is defined as $\phi_{\mathcal{B}}(\alpha) \coloneqq  \alpha$ for $\alpha\in[0,1]$. The growth of the halo function $\phi_\mathcal{B}(\alpha)$ as $\alpha \rightarrow \infty$  enables us to establish \emph{weak type bounds} on $M_{\mathcal B}$; in particular, if $\phi_{\mathcal B}(\alpha) \leq C \alpha^p$ for $\alpha > 1$, then $M_{\mathcal{B}}$ is of restricted weak type $(p,p)$ and accordingly $\mathcal{B}$ differentiates all functions which are locally in $L^q(\mathbb{R}^n)$ for $q > p$.   A prominent unsolved problem in differentiation theory is  the \emph{halo conjecture} which asserts that if $\mathcal{B}$ is a homothecy invariant density basis, then $\mathcal{B}$ must differentiate any measurable function $f$ for which $\phi_{\mathcal{B}}(f)$ is locally integrable. Partial results regarding the halo conjecture may be found in \cite{Gu}, \cite{Hayes} and \cite{soria} .

For these reasons, it is the issue of the growth properties of halo functions that has received the majority of attention in the field of differentiation theory in recent decades.  A fundamental but until recently overlooked issue is that of \emph{continuity and smoothness} of  halo functions. Beznosova and Hagelstein proved in \cite{bh} that the halo function of a  density basis  must be continuous on $[0,1]$ and $(1,\infty)$.   However, they also provided an example of a density basis consisting of nonconvex sets whose halo function exhibits a jump discontinuity at 1.

These results  immediately motivate a closer study of the behavior of halo functions near $1$.  The first results in this regard are due  to A. A. Solyanik, who proved in \cite{solyanik} that the halo functions associated with the centered Hardy-Littlewood maximal operator and the strong maximal operator tend to $1$ as $\alpha\to 1$.   Similar estimates were shown for the uncentered Hardy-Littlewood maximal operator, defined with respect to balls,  by Hagelstein and Parissis in \cite{hp13}, and analogues of these results were proved in the weighted case by Hagelstein and Parissis in \cite{hp14}.

Our previous work on Solyanik estimates was motivated primarily out of intrinsic interest.   However, Michael Lacey has subsequently brought to our attention that he and Sarah Ferguson implicitly used Solyanik-type estimates for the strong maximal operator in their work establishing a commutator estimate enabling one to give a characterization of the product BMO space $\operatorname{BMO}(\mathbb{R}_+^2 \times \mathbb R_{+}^2)$ of Chang and R. Fefferman in terms of commutators; see the appendix of their paper \cite{laceyferg2002}. Two other papers where Solyanik-type estimates were implicitly used  include \cite{cab} by Cabrelli, Lacey, Molter, and Pipher as well as \cite{laceyterwilleger} by Lacey and Terwilliger.

Given a density basis $\mathcal{B}$ it will be convenient for us to define the \emph{sharp Tauberian constant $C_\mathcal{B}(\alpha)$} by $\phi_{\mathcal{B}}(\frac{1}{\alpha})$ for $0 < \alpha < 1$.  In particular
\[
C_\mathcal{B}(\alpha) \coloneqq  \sup_{E : \, 0 < |E| < \infty} \frac{1}{|E|}|\{x \in \mathbb{R}^n : M_\mathcal{B} \chi_E (x) > \alpha \}|.
\]
Recall that if a density basis	 $\mathcal{B}$ is homothecy invariant then $C_\mathcal{B}(\alpha)$ is always finite for $\alpha \in(0,1)$; see \cite{busemannfeller1934}. A more precise quantitative version of the Solyanik estimates discussed above, collectively due to Hagelstein, Parissis, and Solyanik, is the following.

\begin{thm}\cites{hp13, solyanik}\label{t.t1}
Let $C_{\operatorname{HL},b}$, $C_{\operatorname{HL},c}$, and $C_{\operatorname{S}}$ denote the sharp Tauberian constants for the uncentered Hardy-Littlewood maximal operator with respect to balls, the uncentered Hardy-Littlewood maximal operator with respect to cubes, and the strong maximal operator, respectively. Then we have the following asymptotic estimates for $\alpha\in(0,1)$ sufficiently close to $1$:
\[
 \begin{split}
C_{\operatorname{HL},b} (\alpha) - 1  &\lesssim_n  (1 - \alpha)^{\frac{1}{n+1}}
\\
C_{\operatorname{HL},c}(\alpha) - 1 &\sim_n (1 - \alpha)^{\frac{1}{n}} ,
\\
&\text{and}
\\
C_{\operatorname{S}}(\alpha) - 1 &\sim_n (1 - \alpha)^{\frac{1}{n}} .
 \end{split}
\]
\end{thm}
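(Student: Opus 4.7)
The plan is to treat the three estimates separately but following a common template. Fix a measurable set $E \subset \mathbb{R}^n$ with $0 < |E| < \infty$ and $\alpha \in (0,1)$ close to $1$, and set $U_\alpha \coloneqq \{x \in \mathbb{R}^n : M_{\mathcal B}\chi_E(x) > \alpha\}$. Since each of the three bases differentiates $L^\infty(\mathbb{R}^n)$, one has $E \subseteq U_\alpha$ up to a null set, so it suffices to bound $|U_\alpha \setminus E|/|E|$ from above by the stated power of $1-\alpha$; extremizing over $E$ then yields the corresponding bound on the sharp Tauberian constant $C_{\mathcal B}(\alpha) - 1$.

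For the upper bounds I would first select, for each $x \in U_\alpha \setminus E$, a basis element $R_x \ni x$ realizing the maximal inequality to within a small additive error, so that $|R_x \cap E^c| < (1-\alpha)|R_x|$. I would then apply an appropriate covering lemma to this selection: a Vitali-type argument for the ball and cube cases, and the Córdoba--Fefferman rectangle covering lemma for the strong maximal operator, yielding an essentially disjoint subcollection $\{R_j\}$ whose controlled dilates still cover $U_\alpha \setminus E$. The crucial geometric input is a quantitative ``thick boundary'' statement: for a cube $Q$ with $|Q \cap E^c| < (1-\alpha)|Q|$, the annular region obtained by removing the concentric sub-cube of sidelength $(1 - c(1-\alpha)^{1/n})\ell(Q)$ has measure comparable to $(1-\alpha)^{1/n}|Q|$, and any $x \in U_\alpha \setminus E$ associated to $Q$ must in fact lie in this thickened boundary. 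Propagating this through the disjoint collection gives $|U_\alpha \setminus E| \lesssim_n (1-\alpha)^{1/n}|E|$. The same coordinate-by-coordinate idea handles rectangular parallelepipeds once paired with the Córdoba--Fefferman lemma, while the ball case degrades to the exponent $1/(n+1)$ because the curvature of the sphere precludes an analogous flat-face exploitation.

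For the matching lower bounds in the cube and strong cases I would exhibit an extremizer, taking $E$ to be a single cube (respectively parallelepiped) and describing the halo around it directly: the set of points reachable from $E$ by an axis-parallel element of density just above $\alpha$ forms a shell of width proportional to $(1-\alpha)^{1/n}$ times the sidelength, producing the sharp lower bound. The principal obstacle is the geometric localization step in the upper bound, since passing from the averaged density condition $|R_x \cap E^c| < (1-\alpha)|R_x|$ to a meaningful pointwise conclusion about $x$ requires exploiting the convex geometry of the basis in a way that only works cleanly for flat-faced sets; the Córdoba--Fefferman step further demands delicate bookkeeping to avoid losing powers of $\log(1/(1-\alpha))$ that would destroy the sharp exponent. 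Since the complete arguments appear in \cite{solyanik} and \cite{hp13}, I would defer to those sources for the fine combinatorics of the covering sums.
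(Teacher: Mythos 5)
Note first that Theorem~\ref{t.t1} is \emph{quoted} in this paper from \cite{hp13} and \cite{solyanik}; the paper itself offers no proof, so there is no internal argument to compare yours against, only the cited sources. That said, the portion of the argument you do sketch contains a concrete error in the lower bound and an unjustified step in the upper bound.

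Your proposed extremizer for the lower bound --- a single cube (or single rectangle) $E$ --- does not produce a halo excess of order $(1-\alpha)^{1/n}$; it produces order $(1-\alpha)$. Indeed, let $E=[0,1]^n$ and let $R$ be a cube of sidelength $r$ with $|R\cap E|>\alpha|R|$ and $R\ni x$ with $x_1>1$. If $R=\prod_i[a_i,a_i+r]$ and $R$ meets $E$, then in direction $1$ one has $|R\cap E|_1\le 1-\max(a_1,0)$, while $x_1\le a_1+r$, so a short computation gives $x_1-1\lesssim (1-\alpha)$ in every configuration of $R$; the same holds face by face and for axis-parallel rectangles as well (one direction dominates, and the density constraint forces the overshoot in that direction to be $\lesssim(1-\alpha)$ times the sidelength in that direction). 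Hence $|\mathcal H_{\mathcal B,\alpha}(E)\setminus E|\lesssim_n(1-\alpha)|E|$ for a single cube or rectangle, which is \emph{far smaller} than $(1-\alpha)^{1/n}|E|$ for $\alpha$ near $1$. The sharp lower bound of Solyanik therefore requires a genuinely more intricate configuration than a single convex body; the ``shell of width $(1-\alpha)^{1/n}$'' around a single cube simply does not exist.

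The ``thick boundary'' claim in your upper bound is also not justified as stated. From $|Q\cap E^c|<(1-\alpha)|Q|$ one only knows that $Q\setminus E$ has measure less than $(1-\alpha)|Q|$, which is much smaller than the annulus measure $\sim(1-\alpha)^{1/n}|Q|$; the set $Q\setminus E$ could lie entirely in the interior subcube, so there is no reason a point $x\in\mathcal H_{\mathcal B,\alpha}(E)\setminus E$ with its associated $Q$ must lie in the annular region. Nothing in the density condition localizes $x$ near $\partial Q$, and the argument supplies no mechanism for choosing $Q_x$ so as to push $x$ to the boundary. You also correctly flag that a C\'ordoba--Fefferman covering step would risk logarithmic losses, but you do not resolve this. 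In fact, the proofs in \cite{solyanik} and \cite{hp13} do not follow a Vitali/C\'ordoba--Fefferman covering template at all; they proceed via a self-improving (iterated) comparison of sharp Tauberian constants at nearby levels --- an idea closely related to the halo-set embedding proved as Theorem~\ref{t.t2} of the present paper --- rather than via a one-shot geometric covering. I would therefore treat your sketch as capturing some of the right intuitions (density conditions on covering families, a separate analysis of balls versus flat-faced sets) but not as a correct outline of either the upper or the lower bound argument.
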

The estimates for $C_{\operatorname{HL},c}$ and $C_{\operatorname{S}}$ are sharp in the sense that the exponent $\frac{1}{n}$ cannot be replaced by any larger exponent.  Whether or not the exponent associated with $C_{\operatorname{HL},b}$ can be improved to $\frac{1}{n}$ or possibly $\frac{2}{n+1}$ is an open problem; see \cite{hp13}.

The purpose of this paper is to show that the above quantitative \emph{Solyanik estimates} may be used to establish H\"older continuity results for $C_{\operatorname{HL},b}$, $C_{\operatorname{HL},c}$, and $C_{\operatorname S}$, and accordingly yield H\"older continuity results for the corresponding halo functions.   We will moreover see that, given a homothecy invariant density basis $\mathcal{B}$ consisting of convex sets, a Solyanik estimate of the form
\[
C_{\mathcal{B}}(\alpha) - 1 \sim_n (1 - \alpha)^{p},\quad \alpha\to 1^-,
\]
implies that $\phi_{\mathcal{B}}$ lies in the H\"older class $C^p(0,1)$. A key ingredient of the proof of this result will be the careful use of the Calder\'on-Zygmund decomposition to show that \emph{halo sets} of the form
\[
\mathcal{H}_{\mathcal{B}, \alpha}(E) \coloneqq  \{x \in \mathbb{R}^n : M_{\mathcal{B}}\chi_E(x) > \alpha\}
\]
satisfy an imbedding relation
\[
\mathcal{H}_{\mathcal{B}, \alpha}(E) \subset \mathcal{H}_{\mathcal{B}, \alpha (1+ \gamma(\alpha)\delta) } (\mathcal{H}_{\mathcal{B}, 1 - c_n\delta}(E))
\]
whenever $\delta\lesssim_n 1-\alpha$. Here $\gamma(\alpha)\sim_n \min(\alpha,1-\alpha)^{2n}$ and $c_n>0$ is a dimensional constant.

\section*{Notation} In this paper we will make frequent use of the following notational conventions. We write $C,c>0$ for numerical constants that can change value even in the same line of text. The presence of a subscript as in $C_\tau$ denotes dependence on some parameter $\tau$.  We write $A \lesssim B$ whenever $A \leq CB$ for some constant $C > 0$ and $A \sim B$ if $A \lesssim B$ and $B \lesssim A$.   We write $A \lesssim_\tau B$ whenever the implied constant depends on the parameter $\tau$, and $A \sim_\tau B$ if $A \lesssim_\tau B$ and $B \lesssim_\tau A$. Also, given an interval $I \subset \mathbb{R}$, we say that a function $f$ lies in the H\"older class $C^p(I)$ whenever for any compact set $K\subset I$ we have  $|f(x) - f(y)| \lesssim_K |x - y|^p$ for all $x,y \in K$.   This condition corresponds to $f$ being \emph{locally H\"older continuous} with exponent $p$ in $I$.   We are following here the notation and terminology found, for instance, in \cite{gilbargtrudinger}. We many times refer to a rectangular parallelepiped $R$ as a \emph{rectangle} in $\mathbb R^n$. Any rectangle $R$ gives rise to a dyadic grid consisting of homothetic copies of $R$ simply by bisecting each side of $R$ and iterating. Thus any $R$ gives rise to $2^n$ \emph{dyadic children} and any dyadic descendant $S$ of $R$ is contained in a unique \emph{dyadic parent} that we always denote by $S^{(1)}$.


\section{Embedding of Halo Sets associated with bases of rectangular parallelepipeds}

In this section we provide the statement and proof of the following theorem regarding the embedding of halo sets associated with bases of rectangular parallelepipeds.

\begin{thm}\label{t.t2} Let $\mathcal{B}$ be a homothecy invariant collection consisting of rectangular parallelepipeds in $\mathbb{R}^n$.  Given a measurable set $E \subset \mathbb{R}^n$ of finite measure  and $\alpha \in(0,1)$ we define the associated halo set $\mathcal{H}_{\mathcal{B},\alpha }(E)$ by
\[
\mathcal{H}_{\mathcal{B}, \alpha}(E) \coloneqq  \left\{x \in \mathbb{R}^n : M_{\mathcal{B}}\chi_E(x) > \alpha \right\}.
\]
 Then for all $\xi,\delta\in(0,1)$ with $\alpha<1-\delta<\xi$ we have
\[
\mathcal{H}_{\mathcal{B},\alpha}(E) \subset \mathcal{H}_{\mathcal{B},\alpha(1 + \frac{1-\xi}{ 2^n})}(\mathcal{H}_{\mathcal{B},1-\delta}(E)).
\]
\end{thm}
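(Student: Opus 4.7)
Take $x \in \mathcal{H}_{\mathcal{B},\alpha}(E)$ and pick $R \in \mathcal{B}$ with $x \in R$ and $|R \cap E|/|R| > \alpha$. The strategy is to take $R' = R$ and verify that $|R \cap \mathcal{H}_{\mathcal{B},1-\delta}(E)|/|R| > \alpha(1+(1-\xi)/2^n)$, which places $x$ in the target halo set. First I dispose of the easy sub-case: if $|R \cap E|/|R| > 1-\delta$, then $R \in \mathcal{B}$ itself witnesses $R \subset \mathcal{H}_{\mathcal{B},1-\delta}(E)$, so the density equals $1$, which exceeds $\alpha(1+(1-\xi)/2^n)$ after a short verification that $\alpha(1+(1-\xi)/2^n) < 1$ using $\alpha < 1-\delta < \xi < 1$ (this reduces to $(1-\xi)(1-\delta) < 2^n\delta$, which is immediate because $1-\xi < \delta$).

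In the main case $\alpha < |R \cap E|/|R| \leq 1-\delta$, I perform a Calder\'on--Zygmund decomposition of $\chi_E$ at level $\xi$ in the dyadic grid inherited from $R$. This produces disjoint maximal dyadic subrectangles $\{Q_j\}$ of $R$ with $|Q_j \cap E|/|Q_j| > \xi$, whose dyadic parents satisfy $|Q_j^{(1)} \cap E|/|Q_j^{(1)}| \leq \xi$. By homothecy invariance $Q_j \in \mathcal{B}$, and since $\xi > 1-\delta$ the rectangle $Q_j$ itself witnesses $Q_j \subset \mathcal{H}_{\mathcal{B},1-\delta}(E)$; hence $\bigcup_j Q_j \subset R \cap \mathcal{H}_{\mathcal{B},1-\delta}(E)$. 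Lebesgue differentiation within the dyadic grid of $R$ forces $|R \cap E \setminus \bigcup_j Q_j|= 0$, and therefore $\sum_j |Q_j \cap E| = |R \cap E| > \alpha|R|$.

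The quantitative heart is then to show
\[
\sum_j |Q_j| > \alpha\bigl(1 + (1-\xi)/2^n\bigr)|R|,
\]
from which $|R \cap \mathcal{H}_{\mathcal{B},1-\delta}(E)|/|R| \geq \sum_j|Q_j|/|R|$ closes the argument. The key input is the parent bound $|Q_j^{(1)} \cap E| \leq \xi|Q_j^{(1)}| = 2^n\xi|Q_j|$ combined with the observation that at most $2^n-1$ of the $2^n$ dyadic children of any parent can be stopping cubes, for otherwise all $2^n$ children having $E$-density $> \xi$ would force the parent density to exceed $\xi$. The at-least-one non-stopping child contributes $(1-\xi)$ of slack per parent; summing this slack across the disjoint family of \emph{maximal} parents and combining it with the identity $\sum_j|Q_j \cap E| > \alpha|R|$ yields the factor $1+(1-\xi)/2^n$. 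The main obstacle is precisely this delicate accounting: neither the naive bound $\sum_j|Q_j| \geq |R \cap E|$ nor the naive parent estimate $\sum_j|Q_j| > \alpha|R|/(2^n\xi)$ suffices in isolation (the latter being vacuous as $\xi \to 1^{-}$), and one must combine them through the maximal-parent structure to extract exactly the $(1-\xi)/2^n$ improvement.
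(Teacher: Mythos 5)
Your overall framework — Calder\'on--Zygmund decomposition of $\chi_{E\cap R}$ at level $\xi$ in the dyadic grid of $R$, observing that the stopping rectangles $Q_j$ lie in $\mathcal{H}_{\mathcal{B},1-\delta}(E)$ because $\xi>1-\delta$, and that $E\cap R\subset\bigcup_j Q_j$ a.e.\ — matches the paper. But the quantitative step you isolate as the ``heart of the matter,'' namely
\[
\sum_j |Q_j| > \alpha\Bigl(1+\tfrac{1-\xi}{2^n}\Bigr)|R|,
\]
is simply false, and no amount of counting children per parent will rescue it. Consider $n=1$, $\mathcal{B}$ all intervals, $R=[0,1]$, $\alpha=0.5$, $\delta=0.15$, $\xi=0.9$ (so $\alpha<1-\delta<\xi$), and $E\cap R=[0,\,0.5+2^{-k}]$ with $k$ large. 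Then $|E\cap R|/|R|=0.5+2^{-k}\in(\alpha,1-\delta)$, so you are in your main case, and the CZ stopping intervals at level $\xi$ are exactly $[0,0.5]$ and $[0.5,\,0.5+2^{-k}]$, giving $\sum_j|Q_j|=0.5+2^{-k}$. For $k$ large this is smaller than $\alpha(1+\tfrac{1-\xi}{2})|R|=0.525$. More structurally: whenever $E\cap R$ happens to be a finite union of dyadic subrectangles, each $Q_j$ has $E$-density $1$, so $\sum_j|Q_j|=|E\cap R|$, which can be as close to $\alpha|R|$ as you like; the stopping cubes carry no slack at all. In general the only a priori relation in the direction you need is $\sum_j|Q_j|\geq\sum_j|Q_j\cap E|=|E\cap R|>\alpha|R|$, and the gap $\sum_j|Q_j\setminus E|<(1-\xi)\sum_j|Q_j|$ is an \emph{upper} bound, not a lower one.

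What you are missing is the idea of \emph{enlarging} the stopping cubes. After extracting the maximal parents $\{R_{j_k}^{(1)}\}_k$ (disjoint, with $|\bigcup_k R_{j_k}|\geq 2^{-n}|\bigcup_j R_j|$), the paper uses continuity of Lebesgue measure to produce homothetic copies $S_k$ of $R$ with $R_{j_k}\subsetneq S_k\subseteq R_{j_k}^{(1)}$ and $|S_k\cap E|/|S_k|=\xi$ exactly. These $S_k$ are still in $\mathcal{H}_{\mathcal{B},1-\delta}(E)$, are pairwise disjoint, and crucially satisfy $|S_k\setminus E|=(1-\xi)|S_k|$. This is where the nontrivial mass of $\mathcal{H}_{\mathcal{B},1-\delta}(E)\setminus E$ inside $R$ comes from:
\[
|(\mathcal{H}_{\mathcal{B},1-\delta}(E)\setminus E)\cap R|\geq (1-\xi)\sum_k|S_k|\geq(1-\xi)\sum_k|R_{j_k}|\geq\tfrac{1-\xi}{2^n}|E\cap R|>\tfrac{1-\xi}{2^n}\alpha|R|,
\]
and adding $|E\cap R|>\alpha|R|$ (using $E\subset\mathcal{H}_{\mathcal{B},1-\delta}(E)$ a.e.) gives the required density. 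The slack $(1-\xi)$ lives in $S_k\setminus E$, not in the stopping cubes $Q_j$ themselves, which is precisely the point your accounting overlooks. (Your treatment of the case $|R\cap E|/|R|>1-\delta$ is fine, and is a reasonable alternative to the paper's normalization of taking $R$ large enough that $|E\cap R|/|R|\leq\xi$, but the main case is where the argument lives.)
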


\begin{proof}
Let $E \subset \mathbb{R}^n$ be a set of finite measure and fix $\alpha,\delta,\xi \in(0,1)$ with $\alpha<1-\delta<\xi$.  Let $x \in \mathcal{H}_{\mathcal{B},\alpha}(E)$.  Then there exists a rectangle $R \in \mathcal{B}$ containing $x$ such that
\[
\frac{1}{|R|}\int_R \chi_E > \alpha.
\]
Using the homothecy invariance of $\mathcal{B}$, we may assume without loss of generality that the rectangle $R$ is sufficiently large so that $|E\cap R|/|R|\leq \xi <1$ holds as well.

We will now consider a modified Calder\'on-Zygmund decomposition of $\chi_{E\cap R}$ with respect to the dyadic grid generated by $R$ at level $\xi$.
  Along the same lines as the standard Calder\'on-Zygmund decomposition with respect to cubes we see that there exists a collection $\{R_j\}_j$ of dyadic  subrectangles of $R$, each $R_j$ strictly contained in $R$, satisfying
\begin{align*}
&\tag{i}  E\cap R \subset \bigcup_j R_j \quad\text{a.e.},
\\
& \tag{ii} \frac{1}{|R_j|}\int_{R_j}\chi_E > \xi\;, \quad \text{and}
\\
&\tag{iii} \textup{ if $S$ is any dyadic ancestor of $R_j$ contained in $R$, then } \frac{1}{|S|}\int_S \chi_E \leq \xi.
\end{align*}
We recall that for a dyadic rectangle $S$ we denote by $S^{(1)}$ its unique dyadic parent and note that for all $j$ we have $R_j ^{(1)}\subseteq R$. Let now $\{R_{j_k} ^{(1)}\}_k$ be the maximal dyadic rectangles in the collection $\{R_j ^{(1)}\}_j$. Clearly
\[
 \big|\bigcup_k R_{j_k}\big| \geq \frac{1}{2^n}\big|\bigcup_j R_ j \big|.
\]
By continuity of the Lebesgue measure, for each $k$ there exists a homothetic copy $S_k$ of $R$ such that
\[
R_{j_k}\subsetneq S_k \subseteq R_{j_k} ^{(1)}\quad\text{and}\quad \frac{|S_k\cap E|}{|S_k|} = \xi.
\]
Since for each $k$ we have $S_k\subseteq R_{j_k} ^{(1)}$ and the $R_{j_k}^(1)$'s are maximal we have that the $S_k$'s are pairwise disjoint. Furthermore note that $S_k \subset \mathcal H_{\mathcal B,1-\delta}(E)$ since $\xi>1-\delta$. We conclude that
\[
\begin{split}
|(\mathcal{H}_{\mathcal{B}, 1 - \delta}(E) \backslash E) \cap R | &\geq \sum_k \big|(\mathcal{H}_{\mathcal{B}, 1 - \delta}(E) \setminus E) \cap {S}_k\big|\geq \sum_k |S_k\setminus E|
\\
& =(1-\xi) \sum_k|S_k| \geq  (1-\xi)  \sum_k |R_{j_k}|\geq \frac{1-\xi}{2^n} \big| \bigcup_j R_j \big|
\\
& \geq \frac{1-\xi}{2^n} |E\cap R|>\frac{1-\xi}{2^n} \alpha |R|.
\end{split}
\]
The previous calculation immediately implies
\[
\left|\mathcal{H}_{\mathcal{B}, 1 - \delta}(E) \cap R\right| >   \alpha\Big(1 + \frac{1-\xi}{  2^n}\Big)|R|
\]
so that
\[
R \subset \mathcal{H}_{\mathcal{B}, \alpha(1 + \frac{1-\xi}{  2^n})}(\mathcal{H}_{\mathcal{B}, 1 - \delta}(E)).
\]
Hence we can conclude that
\[
\mathcal{H}_{\mathcal{B}, \alpha}(E) \subset \mathcal{H}_{\mathcal{B}, \alpha(1 + \frac{1-\xi}{  2^n})}(\mathcal{H}_{\mathcal{B}, 1 - \delta}(E))
\]
as we wanted.
\end{proof}

\section{Local H\"older continuity of the  Halo Function of the Strong Maximal Operator}
As $|\mathcal{H}_{\mathcal{B}, \alpha}(E)| \leq C_{\mathcal{B}}(\alpha)|E|$, the following corollary follows by Theorem \ref{t.t2} and the results in \cite{bh} regarding continuity of Tauberian constants by letting $\xi\to (1-\delta)^+$.

\begin{cor}\label{cor1}
Let $\mathcal{B}$ be a homothecy invariant density basis consisting of rectangular parallelepipeds in $\mathbb{R}^n$, $\alpha\in(0,1)$, and let $C_{\mathcal{B}}(\alpha)$ be the associated sharp Tauberian constant of $\mathcal{B}$ with respect to $\alpha$.  Then for all $\delta\in(0,1-\alpha)$ we have
\[
C_{\mathcal{B}}(\alpha) \leq C_{\mathcal{B}}\big(\alpha(1 + \frac{\delta}{ 2^n})\big)C_\mathcal{B}(1 - \delta).
\]
\end{cor}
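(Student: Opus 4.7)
The plan is to derive the corollary directly from Theorem~\ref{t.t2} by passing from the set-theoretic embedding to an inequality on measures, taking the supremum over $E$, and finally letting $\xi \to (1-\delta)^+$ using the continuity of the sharp Tauberian constant on $(0,1)$ established in \cite{bh}.

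First, fix $E \subset \mathbb{R}^n$ with $0 < |E| < \infty$ together with $\delta \in (0, 1-\alpha)$ and an auxiliary parameter $\xi \in (1-\delta, 1)$, so that the hypothesis $\alpha < 1-\delta < \xi$ of Theorem~\ref{t.t2} is satisfied. By the very definition of the sharp Tauberian constant we have $|\mathcal{H}_{\mathcal{B},\beta}(F)| \leq C_{\mathcal{B}}(\beta)|F|$ for every admissible $F$ and every $\beta \in (0,1)$. Applying this bound with $F = \mathcal{H}_{\mathcal{B},1-\delta}(E)$ and $\beta = \alpha(1 + \tfrac{1-\xi}{2^n})$ to the embedding provided by Theorem~\ref{t.t2}, and then reapplying it with $F = E$ and $\beta = 1-\delta$, produces
\[
|\mathcal{H}_{\mathcal{B},\alpha}(E)| \leq C_{\mathcal{B}}\bigl(\alpha(1 + \tfrac{1-\xi}{2^n})\bigr)\,|\mathcal{H}_{\mathcal{B},1-\delta}(E)| \leq C_{\mathcal{B}}\bigl(\alpha(1 + \tfrac{1-\xi}{2^n})\bigr)\,C_{\mathcal{B}}(1-\delta)\,|E|.
\]
Dividing by $|E|$ and taking the supremum over all admissible $E$ yields
\[
C_{\mathcal{B}}(\alpha) \leq C_{\mathcal{B}}\bigl(\alpha(1 + \tfrac{1-\xi}{2^n})\bigr)\,C_{\mathcal{B}}(1-\delta) \qquad \text{for every } \xi \in (1-\delta,1).
\]

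It then remains to send $\xi \to (1-\delta)^+$. As $\xi$ decreases to $1-\delta$, the quantity $\alpha(1+\tfrac{1-\xi}{2^n})$ increases to $\alpha(1+\tfrac{\delta}{2^n})$, which lies strictly below $1$ since $\delta < 1-\alpha$ forces $\alpha + \tfrac{\alpha\delta}{2^n} < \alpha + (1-\alpha) = 1$. Invoking the continuity of $C_{\mathcal{B}}$ on $(0,1)$ proved in \cite{bh}, the first factor on the right converges to $C_{\mathcal{B}}(\alpha(1+\tfrac{\delta}{2^n}))$, and the stated inequality follows at once.

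The only non-routine ingredient is the right-continuity needed to justify the passage to the limit, and this is precisely the continuity result already recorded in \cite{bh}; consequently no substantive obstacle arises beyond correctly bookkeeping the admissible range of $\xi$ and checking that the limiting argument of $C_{\mathcal{B}}$ remains inside $(0,1)$.
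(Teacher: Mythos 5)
Your proposal is correct and matches the paper's approach exactly: the paper's one-line justification is precisely "apply the measure bound $|\mathcal{H}_{\mathcal{B},\beta}(F)|\le C_{\mathcal{B}}(\beta)|F|$ twice to the embedding of Theorem~\ref{t.t2}, take the supremum over $E$, and let $\xi\to(1-\delta)^+$ using the continuity of $C_{\mathcal{B}}$ from \cite{bh}." You have merely written out the details, including the useful sanity check that $\alpha(1+\tfrac{\delta}{2^n})<1$ so the limiting argument stays within the domain of continuity.
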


We shall now see that this corollary enables us to prove that the sharp Tauberian constant $C_{\operatorname S}(\alpha)$, associated with the strong maximal operator $M_{\operatorname S}$, satisfies a local H\"older continuity condition, with in fact $C_{S}  \in C^{\frac{1}{n}}(0,1)$.  We shall need the following technical lemma.

\begin{lem}\label{l.l1}
Let $\mathcal{B}$ be a homothecy invariant density basis consisting of rectangular parallelepipeds in $\mathbb{R}^n$.  Suppose that there exists $\alpha_o\in(0,1)$ such that the inequality $C_{\mathcal{B}}(\alpha) - 1 \lesssim_{ \mathcal B}(1 - \alpha)^p$ holds for all $\alpha\in(\alpha_o,1)$ and for some fixed $ p\in(0,1)$. Then $C_{\mathcal{B}}(\alpha)$ is locally H\"older continuous with exponent $p$ on $(0,1)$, that is, $C_{\mathcal{B}} \in C^p(0,1)$.
\end{lem}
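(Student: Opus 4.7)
The plan is to extract the H\"older estimate directly from Corollary \ref{cor1} by a simple change of variables, with no iteration required. First I would record two structural facts that are free: $C_{\mathcal B}$ is non-increasing on $(0,1)$, because the super-level sets $\{M_{\mathcal B}\chi_E > \alpha\}$ shrink as $\alpha$ grows, and $C_{\mathcal B}$ is bounded on every compact subset of $(0,1)$ by homothecy invariance combined with the Busemann-Feller result cited earlier in the paper.

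Fix a compact $K \subset [a,b] \subset (0,1)$ and take $\alpha,\beta \in K$ with $\alpha < \beta$. Reparametrize by writing $\beta = \alpha(1+\delta/2^n)$, so $\delta = 2^n(\beta-\alpha)/\alpha$, and apply Corollary \ref{cor1} with this $\delta$. After rearrangement,
\[
0 \leq C_{\mathcal B}(\alpha) - C_{\mathcal B}(\beta) \leq C_{\mathcal B}(\beta)\bigl(C_{\mathcal B}(1-\delta) - 1\bigr),
\]
where non-negativity of the middle expression is monotonicity. This bound is valid as soon as $\delta < 1-\alpha$; if in addition $\beta - \alpha$ is small enough that $\delta < 1-\alpha_o$, the hypothesis yields $C_{\mathcal B}(1-\delta) - 1 \lesssim_{\mathcal B} \delta^p$. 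Together with $C_{\mathcal B}(\beta) \leq C_{\mathcal B}(a)$ and $\delta \leq (2^n/a)(\beta-\alpha)$, this produces
\[
|C_{\mathcal B}(\alpha) - C_{\mathcal B}(\beta)| \lesssim_{K,\mathcal B} (\beta-\alpha)^p.
\]

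For pairs $\alpha,\beta \in K$ whose separation exceeds the threshold fixed in the previous paragraph, $(\beta-\alpha)^p$ is bounded below by a positive constant that depends only on $K$ and $\alpha_o$, and the crude estimate $|C_{\mathcal B}(\alpha) - C_{\mathcal B}(\beta)| \leq 2 C_{\mathcal B}(a)$ then delivers the same H\"older bound at the cost of inflating the constant. This exhausts all pairs in $K$ and gives $C_{\mathcal B} \in C^p(0,1)$.

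There is no genuine obstacle here; the point requiring attention is purely bookkeeping, namely that on a fixed compact $K$ one can arrange $\delta$ to satisfy simultaneously $\delta < 1-\alpha$ (so Corollary \ref{cor1} applies), $1-\delta > \alpha_o$ (so the Solyanik hypothesis applies), and $\delta \sim_K \beta-\alpha$. A single application of Corollary \ref{cor1} per pair suffices, and the loss of a factor $1/\alpha^p$ is absorbed uniformly on $K$ because $K$ is bounded away from $0$.
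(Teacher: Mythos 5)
Your proof is correct and follows essentially the same route as the paper: reparametrize $\beta=\alpha(1+\delta/2^n)$, apply Corollary~\ref{cor1} once, invoke the Solyanik hypothesis on $C_{\mathcal B}(1-\delta)-1$, and handle well-separated pairs trivially via boundedness on compacts. The only cosmetic deviation is that you obtain uniform boundedness on $K$ from monotonicity of $C_{\mathcal B}$ together with the Busemann--Feller finiteness, whereas the paper quotes the continuity result from \cite{bh}; both are legitimate and give the same conclusion.
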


\begin{proof}  Let us fix a compact set $K\in (0,1)$ and let $m_K,M_K\in(0,1)$ be such that $m_K\leq x\leq M_K$ for all $x\in K$. By the results in \cite{bh}  we have that $C_{\mathcal B}$ is continuous in $(0,1)$ thus $\sup_{x\in K} C_{\mathcal B}(x)\lesssim_{ \mathcal B,K} 1$.

We first consider the case  $x,y\in K$ with  $0<y-x<\min\big(\frac{1-M_K}{2^n} m_K, \frac{1-\alpha_o}{2^n} m_K\big)\eqqcolon \eta$. We write
\[
C_{\mathcal B}(x)-C_{\mathcal B}(y)= C_{\mathcal B}(x)-C_{\mathcal B}\big(x(1+2^n\frac{y-x}{2^nx})\big).
\]
Since
\[
 2^n\frac{y-x}{x} < 2^n\frac{1-M_K}{2^n m_K} m_K= 1-M_K\leq 1-x,
\]
we get by Corollary~\ref{cor1} that
\[
 C_{\mathcal B}(x)-C_{\mathcal B}(y)\leq C_{\mathcal B}\big(y) \Big[C_{\mathcal B}\big(1-2^n\frac{y-x}{x}\big)-1\Big]\lesssim_{ \mathcal B,K }  \Big[C_{\mathcal B}\big(1-2^n\frac{y-x}{x}\big)-1\Big].
\]
Since
\[
 1-2^n\frac{y-x}{x} >1- 2^n\frac{1-\alpha_o}{2^n m_K} m_K=\alpha_o
\]
we get by the hypothesis of the lemma that
\[
  C_{\mathcal B}(x)-C_{\mathcal B}(y) \lesssim_{\mathcal{B},K} \frac{(y-x)^p}{x^p}\lesssim_K (y-x)^p,\quad x,y\in K,\quad 0<y-x<\eta.
\]
We can now conclude that
\[
\sup_{{\substack {x,y\in K \\  0<y-x<\eta }}} \frac{|C_{\mathcal{B}}(x) - C_{\mathcal{B}}(y)|}{|x-y|^p}  \lesssim_{ \mathcal B,K } 1.
\]
On the other hand, if $x,y\in K$ with $y-x\geq\eta$ then the H\"older bound follows trivially since $\sup_{x,y\in K}|C_{\mathcal B}(x)-C_{\mathcal B}(y)|\lesssim_{\mathcal B,K}1$ so we are done.
\end{proof}
As we have that the strong maximal operator satisfies the Solyanik estimate
\[
C_{\operatorname S}(\alpha) - 1 \sim_n (1 - \alpha)^\frac{1}{n},\quad \alpha\to 1^-,
\]
we immediately conclude the following.
\begin{cor}
Let $C_{ \operatorname S}(\alpha)$ denote the sharp Tauberian constant of the strong maximal operator in $\mathbb{R}^n$ with respect to $\alpha\in(0,1)$.  Then
\[
C_{ \operatorname S} \in C^{1/n}( 0,1 ).
\]
\end{cor}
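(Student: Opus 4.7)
The plan is to observe that this corollary is an immediate consequence of Lemma~\ref{l.l1} combined with the quantitative Solyanik estimate for the strong maximal operator recalled in Theorem~\ref{t.t1}. Hence the task reduces to verifying the three hypotheses of Lemma~\ref{l.l1} in the case $\mathcal{B} = \mathcal{B}_{\operatorname S}$, the collection of all rectangular parallelepipeds in $\mathbb{R}^n$ with sides parallel to the coordinate axes.

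First I would note that $\mathcal{B}_{\operatorname S}$ is manifestly a homothecy invariant collection of rectangular parallelepipeds, and it is a well-known density basis in $\mathbb{R}^n$ (this is Jessen--Marcinkiewicz--Zygmund, and is in any case cited implicitly via \cite{Gu} earlier in the paper). Second, Theorem~\ref{t.t1} provides constants $C_n>0$ and $\alpha_o\in(0,1)$ such that
\[
C_{\operatorname S}(\alpha) - 1 \leq C_n (1-\alpha)^{1/n}\quad\text{for all }\alpha\in(\alpha_o,1),
\]
which is exactly the Solyanik-type hypothesis of Lemma~\ref{l.l1} with exponent $p = 1/n \in (0,1)$. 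Therefore Lemma~\ref{l.l1} applies directly and yields $C_{\operatorname S}\in C^{1/n}(0,1)$, as desired.

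There is no genuine obstacle at this stage, since all the substantive work has already been carried out: the halo set embedding in Theorem~\ref{t.t2} is the structural input, Corollary~\ref{cor1} turns it into the submultiplicative-type inequality for Tauberian constants, and Lemma~\ref{l.l1} converts a one-sided Solyanik estimate near $\alpha=1$ into a local H\"older bound on all of $(0,1)$ by iterating the inequality and using continuity of $C_{\mathcal{B}}$ from \cite{bh}. The present corollary is then simply the specialization of these results to the strong maximal operator, for which the Solyanik exponent $1/n$ is supplied by \cite{solyanik}.

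Finally, since the halo function $\phi_{\operatorname S}$ and the sharp Tauberian constant $C_{\operatorname S}$ are related by $\phi_{\operatorname S}(\alpha) = C_{\operatorname S}(1/\alpha)$, the statement $C_{\operatorname S}\in C^{1/n}(0,1)$ is equivalent, via the change of variables $\alpha\mapsto 1/\alpha$ (which is locally bi-Lipschitz on any compact subset of $(1,\infty)$), to $\phi_{\operatorname S}\in C^{1/n}(1,\infty)$, recovering the halo-function formulation advertised in the introduction.
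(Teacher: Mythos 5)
Your proposal is correct and follows exactly the paper's route: the corollary is stated in the paper as an immediate consequence of Lemma~\ref{l.l1} together with the Solyanik estimate $C_{\operatorname S}(\alpha)-1\sim_n(1-\alpha)^{1/n}$ from Theorem~\ref{t.t1}. The only difference is that you spell out the hypothesis-checking (homothecy invariance, density basis, the one-sided bound near $1$) which the paper leaves implicit.
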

Recall that, following our previous convention, the halo function is given as $\phi_{\operatorname S} (x) = C_{\operatorname S}(1/x)$ for $x > 1$ and $\phi_{\operatorname S} (x) = x$ for $x \leq 1$.  Hence the fact that $C_{\operatorname S}(\alpha) - 1 \lesssim_n (1 - \alpha)^{1/n}$  enables us to immediately establish the following continuity estimate for the halo function on all of $[0,\infty)$.
\begin{cor}
Let $\phi_{\operatorname S} (\alpha)$ be the halo function associated with the strong maximal operator on $\mathbb{R}^n$.  Then
\[
\phi_{\operatorname S}	 \in C^{1/n}([0,\infty)) .
\]
\end{cor}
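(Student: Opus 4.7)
The function $\phi_{\operatorname S}$ is defined piecewise as $\phi_{\operatorname S}(x)=x$ on $[0,1]$ and $\phi_{\operatorname S}(x)=C_{\operatorname S}(1/x)$ on $(1,\infty)$. My plan is to establish the local H\"older estimate on each of these two pieces separately and then patch them at the seam $x=1$, using the quantitative Solyanik estimate $C_{\operatorname S}(\alpha)-1\lesssim_n (1-\alpha)^{1/n}$ to control the crossover.

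On $[0,1]$ the function is the identity, hence Lipschitz and trivially in $C^{1/n}$. On $(1,\infty)$, the reciprocal $x\mapsto 1/x$ is smooth and maps any compact subset of $(1,\infty)$ into a compact subset of $(0,1)$; by the previous corollary $C_{\operatorname S}\in C^{1/n}(0,1)$, so the composition $\phi_{\operatorname S}=C_{\operatorname S}\circ(1/\cdot)$ inherits local H\"older continuity of exponent $1/n$ on $(1,\infty)$.

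The only remaining case is a compact $K\subset[0,\infty)$ containing $x=1$, for pairs $x\leq 1\leq y$ with $x,y\in K$. For such pairs I would split
\[
|\phi_{\operatorname S}(y)-\phi_{\operatorname S}(x)|\leq (\phi_{\operatorname S}(y)-1)+(1-x),
\]
using $\phi_{\operatorname S}(1)=1$. The second summand is dominated by $(1-x)^{1/n}\leq |y-x|^{1/n}$ once $1-x\leq 1$, while for the first summand the Solyanik estimate yields
\[
\phi_{\operatorname S}(y)-1=C_{\operatorname S}(1/y)-1\lesssim_n (1-1/y)^{1/n}=((y-1)/y)^{1/n}\lesssim_K (y-1)^{1/n}\leq |y-x|^{1/n}.
\]
Adding these gives the desired estimate with a constant depending only on $n$ and $K$.

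The crux of the argument, and the only nontrivial obstacle, is controlling $\phi_{\operatorname S}(y)-1$ as $y\to 1^+$: mere continuity of $C_{\operatorname S}$ at $1^-$ would suffice only for continuity of $\phi_{\operatorname S}$ at $x=1$, whereas the H\"older bound with exponent $1/n$ genuinely uses the full quantitative Solyanik estimate of Theorem~\ref{t.t1}.
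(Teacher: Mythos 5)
Your decomposition into the pieces $[0,1]$, $(1,\infty)$, and straddling pairs is a natural first step, and the seam estimate for $x\leq 1\leq y$ is carried out correctly. However, there is a genuine gap: your cases do not cover pairs $1<x<y$ with \emph{both} $x$ and $y$ close to $1$. You invoke $C_{\operatorname S}\in C^{1/n}(0,1)$ and compose with $1/\cdot$, but $C^{1/n}(0,1)$ only guarantees a H\"older bound on compact subsets $K\subset(0,1)$, with a constant $\lesssim_K$ that is not asserted to be uniform as $K$ approaches the endpoint $1$. In fact, inspecting the proof of Lemma~\ref{l.l1} one sees that the constant obtained there involves the factor $\eta^{-1/n}$ with $\eta\sim 1-M_K$, which blows up as $\sup K\to 1^-$. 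Hence $\phi_{\operatorname S}\in C^{1/n}(1,\infty)$ does not, on its own, give a uniform H\"older bound on $(1,2]$, say, and the straddling-pair split does not apply to $1<x<y$; moreover, routing through $\phi_{\operatorname S}(1)=1$ via the triangle inequality yields only $(\phi_{\operatorname S}(y)-1)+(\phi_{\operatorname S}(x)-1)\lesssim_n(y-1)^{1/n}+(x-1)^{1/n}$, which is not dominated by $(y-x)^{1/n}$ when $x-1$ is much larger than $y-x$.

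To close the gap you should argue directly with Corollary~\ref{cor1}, just as in the proof of Lemma~\ref{l.l1}, but now allowing the range of $\alpha$ to approach $1$. Concretely, for $1\leq x<y$ in a compact $K\subset[1,\infty)$ set $\delta\coloneqq 2^n(y-x)/x$. If $\delta<1-1/y$, Corollary~\ref{cor1} applied with $\alpha=1/y$ gives
\[
C_{\operatorname S}(1/y)-C_{\operatorname S}(1/x)\leq C_{\operatorname S}(1/x)\bigl[C_{\operatorname S}(1-\delta)-1\bigr]\lesssim_{n,K}\delta^{1/n}\lesssim_{n,K}(y-x)^{1/n},
\]
the middle step using the Solyanik estimate and the boundedness of $C_{\operatorname S}$ on $[1/\sup K,1]$. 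If instead $\delta\geq 1-1/y$, then $1-1/y\lesssim_{n,K} y-x$ and the Solyanik estimate alone gives $C_{\operatorname S}(1/y)-C_{\operatorname S}(1/x)\leq C_{\operatorname S}(1/y)-1\lesssim_n(1-1/y)^{1/n}\lesssim_{n,K}(y-x)^{1/n}$. This is the precise sense in which the quantitative Solyanik estimate is needed across the seam; you correctly flag its role for the straddling case, but the interior-right-of-$1$ case also requires it, together with the embedding inequality, and cannot be read off from the previous corollary by composition alone.
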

The reasoning in this section applies not only to the strong maximal operator $M_{\operatorname S}$, but also the Hardy-Littlewood maximal operator $M_{\operatorname{HL},c}$ with respect to cubes.   Observing that the exponents in their Solyanik estimates are the same, see Theorem~\ref{t.t1}, we have the following.
\begin{cor}\label{c4}
Let $\phi_{\operatorname{HL},c} (\alpha)$ be the halo function associated with the Hardy-Littlewood maximal operator with respect to cubes on $\mathbb{R}^n$.  Then
\[
\phi_{\operatorname{HL},c} \in C^{1/n}([0,\infty)) .
\]
\end{cor}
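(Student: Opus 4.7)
The plan is to mirror the argument that was just carried out for the strong maximal operator. The collection of cubes in $\mathbb{R}^n$ with sides parallel to the coordinate axes is a homothecy invariant density basis consisting of rectangular parallelepipeds, so both Corollary~\ref{cor1} and Lemma~\ref{l.l1} apply to $M_{\operatorname{HL},c}$ without any modification. Since Theorem~\ref{t.t1} furnishes the Solyanik estimate $C_{\operatorname{HL},c}(\alpha) - 1 \lesssim_n (1-\alpha)^{1/n}$ for $\alpha$ sufficiently close to $1$, a direct application of Lemma~\ref{l.l1} with exponent $p = 1/n$ gives $C_{\operatorname{HL},c} \in C^{1/n}(0,1)$.

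It remains to transfer this local H\"older bound from the sharp Tauberian constant to the halo function on all of $[0,\infty)$. I would split a compact set $K \subset [0,\infty)$ according to its position relative to $1$. On $K \cap [0,1]$ the function $\phi_{\operatorname{HL},c}(\alpha)=\alpha$ is Lipschitz, hence locally H\"older of exponent $1/n$. On $K \cap (1,\infty)$ we have $\phi_{\operatorname{HL},c}(\alpha) = C_{\operatorname{HL},c}(1/\alpha)$, a composition of the smooth and bi-Lipschitz map $\alpha \mapsto 1/\alpha$ (restricted to a compact subset of $(1,\infty)$ mapping into a compact subset of $(0,1)$) with the locally H\"older function $C_{\operatorname{HL},c}$; the composition is therefore locally H\"older of the same exponent. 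Finally, for points on either side of $1$, the Solyanik estimate gives, for $\alpha>1$ close to $1$,
\[
\phi_{\operatorname{HL},c}(\alpha) - 1 = C_{\operatorname{HL},c}(1/\alpha) - 1 \lesssim_n (1 - 1/\alpha)^{1/n} \lesssim (\alpha - 1)^{1/n},
\]
so the one-sided bounds glue together across $\alpha=1$.

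Combining these three pieces via the usual triangle-inequality trick—inserting the point $1$ between any pair $x<1<y$ in $K$—yields the H\"older bound $|\phi_{\operatorname{HL},c}(x) - \phi_{\operatorname{HL},c}(y)| \lesssim_{K} |x-y|^{1/n}$ for all $x,y \in K$. There is no genuine obstacle here: the analytic content was already packaged into Theorem~\ref{t.t2}, Lemma~\ref{l.l1}, and the Solyanik estimate of Theorem~\ref{t.t1}. The only thing to watch is the bookkeeping at the junction $\alpha = 1$, which is handled exactly as in the preceding corollary for $\phi_{\operatorname S}$.
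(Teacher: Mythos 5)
Your proposal is correct and follows the same route as the paper: invoke Corollary~\ref{cor1} and Lemma~\ref{l.l1} for the basis of cubes, feed in the Solyanik estimate $C_{\operatorname{HL},c}(\alpha)-1\lesssim_n(1-\alpha)^{1/n}$ from Theorem~\ref{t.t1} to get $C_{\operatorname{HL},c}\in C^{1/n}(0,1)$, and then glue across $\alpha=1$ using the Solyanik estimate together with the trivial Lipschitz bound on $[0,1]$. The paper compresses this into one sentence (``observing that the exponents in their Solyanik estimates are the same''), but the content is identical to what you have written out in detail.
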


\section{Local H\"older continuity of the  Halo Function of the Hardy-Littlewood Maximal Operator averaging over balls.}

A basis consisting of rectangular parallelepipeds enjoys the advantage of enabling arguments along the lines of the Calder\'on-Zygmund decomposition employed in the proof of Theorem \ref{t.t2}.  To yield H\"older continuity estimates for the halo function of the Hardy-Littlewood maximal operator with respect to \emph{balls}, however, additional arguments must be made that we provide here.

\begin{thm}\label{t.balls}
Let $\phi_{\operatorname{HL}, b}(\alpha)$ be the halo function associated with the Hardy-Littlewood maximal operator with respect to balls on $\mathbb{R}^n$.  Then
\[
\phi_{\operatorname{HL},b}  \in C^{1/n+1}([0,\infty)).
\]
Moreover, the associated Tauberian constants $C_{\operatorname{HL},b}(\alpha)$ satisfy
\[
C_{\operatorname{HL},b}  \in C^{1/n}(0,1)
\]
and consequently
\[
\phi_{\operatorname{HL},b}  \in C^{1/n}(1,\infty)	.
\]
\end{thm}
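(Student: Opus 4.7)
The plan is to imitate the structure of Sections 2 and 3, adapted from rectangular parallelepipeds to Euclidean balls. The two building blocks from the rectangle case that must be replicated are (i) the halo-set embedding of Theorem \ref{t.t2} and (ii) the Hölder transfer mechanism of Lemma \ref{l.l1}. Once these are in place, plugging in the Solyanik estimate $C_{\operatorname{HL},b}(\alpha)-1 \lesssim_n (1-\alpha)^{1/(n+1)}$ from Theorem \ref{t.t1} directly produces the claimed Hölder classes for $C_{\operatorname{HL},b}$ on $(0,1)$, and the corresponding statements for $\phi_{\operatorname{HL},b}$ follow by the change of variable $\phi_{\operatorname{HL},b}(\alpha)=C_{\operatorname{HL},b}(1/\alpha)$ for $\alpha>1$, together with $\phi_{\operatorname{HL},b}(\alpha)=\alpha$ on $[0,1]$ and a check of matching at $\alpha=1$ which yields the global statement on $[0,\infty)$.

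First I would establish a ball analog of Theorem \ref{t.t2}: for every set $E$ of finite measure, every $\alpha\in(0,1)$, and every $\delta\in(0,1-\alpha)$,
\[
\mathcal{H}_{\operatorname{HL},b,\alpha}(E)\subset \mathcal{H}_{\operatorname{HL},b,\,\alpha(1+c_n\delta)}\bigl(\mathcal{H}_{\operatorname{HL},b,1-\delta}(E)\bigr)
\]
for some dimensional constant $c_n>0$. Since balls carry no dyadic structure, the Calder\'on--Zygmund scheme of Theorem \ref{t.t2} is unavailable and must be replaced by a Vitali-type covering. Given $x\in\mathcal{H}_{\operatorname{HL},b,\alpha}(E)$, I would fix a ball $B\ni x$ with $|B\cap E|/|B|$ slightly exceeding $\alpha$ (using homothecy invariance to arrange the density to lie in a preassigned subinterval of $(\alpha,1)$). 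For each Lebesgue point $z$ of $E$ inside $B$, the continuous function $\rho\mapsto |B(z,\rho)\cap E|/|B(z,\rho)|$ runs from $1$ as $\rho\to 0^+$ down to $0$ as $\rho\to\infty$, so one can choose $\rho_z$ so that $B_z\coloneqq B(z,\rho_z)$ has density equal to any fixed $\xi>1-\delta$; in particular $B_z\subset \mathcal{H}_{\operatorname{HL},b,1-\delta}(E)$. A Vitali selection from $\{B_z\}$ produces a disjoint subfamily $\{B_k\}$ whose $5$-enlargements cover $E\cap B$ almost everywhere, so $\sum_k|B_k|\gtrsim_n |E\cap B|>\alpha|B|$, and each $B_k$ sits entirely inside $\mathcal{H}_{\operatorname{HL},b,1-\delta}(E)$ contributing mass $(1-\xi)|B_k|$ outside $E$. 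Letting $\xi\to(1-\delta)^+$ yields the linear-in-$\delta$ improvement factor required in the embedding.

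The main obstacle lies precisely here: in the rectangle case the chosen rectangles $S_k$ are automatically contained in $R$ because the $R_{j_k}^{(1)}$ are dyadic subrectangles of $R$, which cleanly lower-bounds $|\mathcal{H}_{\mathcal{B},1-\delta}(E)\cap R|$. For balls the Vitali-selected $B_k$'s may stick out of $B$, so one must control the leakage in order to get a local bound on $|\mathcal{H}_{\operatorname{HL},b,1-\delta}(E)\cap B|$ rather than only a global one. I would handle this by restricting the selection to Lebesgue points $z$ lying in a slightly shrunken core $(1-\epsilon)B$ — where the admissible radii $\rho_z$ stay bounded in terms of the radius of $B$ — and then optimizing $\epsilon$ against $\delta$; the dimensional constant $5^n$ from Vitali absorbs into $c_n$.

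Once the embedding is in hand, the proof proceeds exactly as in Section 3: passing to sharp Tauberian constants and using the continuity of $C_{\operatorname{HL},b}$ on $(0,1)$ from \cite{bh} delivers a ball version of Corollary \ref{cor1} of the form $C_{\operatorname{HL},b}(\alpha)\leq C_{\operatorname{HL},b}(\alpha(1+c_n\delta))\,C_{\operatorname{HL},b}(1-\delta)$. Feeding this into the argument of Lemma \ref{l.l1} with the Solyanik input from Theorem \ref{t.t1} yields local Hölder continuity of $C_{\operatorname{HL},b}$ on $(0,1)$ with exponent matching the Solyanik exponent, and hence the stated Hölder class for $\phi_{\operatorname{HL},b}$ on $(1,\infty)$. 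The global statement on $[0,\infty)$ then follows by splicing this local estimate on $(1,\infty)$ with the trivial Lipschitz estimate for $\phi_{\operatorname{HL},b}(\alpha)=\alpha$ on $[0,1]$ and checking the Hölder bound across $\alpha=1$ directly from the Solyanik estimate $\phi_{\operatorname{HL},b}(\alpha)-1=C_{\operatorname{HL},b}(1/\alpha)-1\lesssim_n (\alpha-1)^{1/(n+1)}$.
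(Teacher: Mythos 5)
Your proposal cannot reach the exponent $1/n$ claimed on $(0,1)$, because the only Solyanik input you feed into the transfer mechanism of Lemma~\ref{l.l1} is the estimate $C_{\operatorname{HL},b}(\alpha)-1\lesssim_n(1-\alpha)^{1/(n+1)}$ from Theorem~\ref{t.t1}. That machinery reproduces whatever exponent it is given, so your route can produce at best $C_{\operatorname{HL},b}\in C^{1/(n+1)}(0,1)$; you even assert that this ``directly produces the claimed H\"older classes for $C_{\operatorname{HL},b}$ on $(0,1)$,'' but the statement being proved asserts $C^{1/n}(0,1)$, which is strictly stronger than what that input can yield. The missing idea is to embed into halo sets of a \emph{different} maximal operator with a better Solyanik estimate. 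Starting from a ball $B$ with $|B\cap E|/|B|=\alpha$, a pigeonhole over a dyadic tiling of $B$ combined with the intermediate value theorem produces a cube $R\subset B$ with $|R|\sim_n\min(\alpha,1-\alpha)^n|B|$ and $E$-density comparable to $\alpha$; a Calder\'on--Zygmund decomposition in the dyadic grid of $R$ then yields stopping cubes $S_k\subset R$ that lie in $\mathcal H_{\operatorname S,1-\delta}(E)$ (cubes are rectangles). This gives
\[
C_{\operatorname{HL},b}(\alpha)\le C_{\operatorname{HL},b}\big(\alpha(1+c_n\min(\alpha,1-\alpha)^n\delta)\big)\,C_{\operatorname S}(1-\delta),\qquad \delta\lesssim 1-\alpha,
\]
and it is the strong-maximal Solyanik estimate $C_{\operatorname S}(\alpha)-1\sim_n(1-\alpha)^{1/n}$ that now runs through the Lemma~\ref{l.l1} argument and delivers the exponent $1/n$ on compacta of $(0,1)$. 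Only at the endpoint $\alpha=1$, where the cube $R$ degenerates and the inequality gives no information, is one forced back onto the Solyanik estimate for $C_{\operatorname{HL},b}$ itself, and this is exactly why the global class on $[0,\infty)$ carries the weaker exponent $1/(n+1)$ while the local class on $(0,1)$ carries $1/n$.

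A secondary but real concern is that your Vitali-based embedding is not clearly sound, and your proposed fix does not address the actual obstruction. Restricting centers $z$ to a shrunken core $(1-\epsilon)B$ controls where the balls $B_z=B(z,\rho_z)$ are centered, not how large they are: the radius $\rho_z$ at which the $E$-density drops to $\xi$ is dictated by the geometry of $E$ and can be comparable to the radius of $B$ even for $z$ deep in the core (for example when $E$ contains a ball of radius comparable to that of $B$). In that case the Vitali subfamily $\{B_k\}$ still satisfies $\sum_k|B_k\setminus E|\gtrsim(1-\xi)|E\cap B|$, but you may only count the portion of $B_k\setminus E$ that lies \emph{inside} $B$, and nothing forces that to be a definite fraction of $|B_k\setminus E|$. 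The paper's detour through a cube $R\subset B$ and a dyadic CZ stopping-time sidesteps this entirely, since every stopping cube and its parent are contained in $R$ by construction.
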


\begin{proof}
Let $E \subset \mathbb{R}^n$ be a set of finite measure and $\alpha\in(0,1)$. Let $x\in \mathcal H_{\operatorname{HL},b,\alpha}(E)$. By the definition of $ \mathcal H_{\operatorname{HL},b,\alpha}(E)$ there exists some ball $\tilde B\subset \mathbb R^n$ with $\tilde B\ni x$ and
\[
 \frac{1}{|\tilde B|}\int_{\tilde B} \chi_E >\alpha.
\]
Then there exists a ball $B\supset \tilde B$ such that
\[
 \frac{1}{|B|}\int_B \chi_E =\alpha.
\]
By scaling, we can also assume that $B$ is the unit ball of $\mathbb R^n$.  Let also $0<\epsilon<\min(\alpha,1-\alpha)$ be a small parameter to be chosen later.  We now denote by $\mathcal{C}_m$ the collection of all dyadic cubes of sidelength $2^{-m}$ which are strictly contained in $B$. We choose $m$ to be a large positive integer so that
\[
\Big|B \setminus \bigcup_{C \in \mathcal{C}_m}C\Big| <\epsilon|B|.
\]
Observing that the measure of the union of all dyadic cubes in $\mathcal{C}_m$ intersecting the boundary of $B$ is $\sim_n 2^{-m}$ and using elementary geometric arguments we see that we may assume $2^{-m} \sim_n \epsilon$.

Next we claim that there exists a cube $R\subseteq B$, with $|R|= 2^{-mn} \sim_n \epsilon^n$, such that
\begin{equation}\label{e.goodcube}
 \frac{\alpha-\epsilon}{1-\epsilon}\leq \frac{1}{|R|}\int_R \chi_E \leq \frac{\alpha}{1-\epsilon}.
\end{equation}
Indeed, if for all $C \in \mathcal{C}_m$ the left hand side inequality of \eqref{e.goodcube} failed, we would have
\[
\begin{split}
 |B \cap E| &= |\bigcup_{C\in\mathcal{C}_m} C \cap E| + | \big(B \setminus\bigcup_{C\in\mathcal{C}_m} C \big) \cap E)|
\\
&<\frac{\alpha-\epsilon}{1-\epsilon} \sum_{C\in\mathcal{C}_m} |C| +  |B|-\big|\bigcup_{C\in\mathcal{C}_m} C\big|=|B|-\frac{1-\alpha}{1-\epsilon}\big|\bigcup_{C\in\mathcal{C}_m} C\big|
\\
& \leq |B|-\frac{1-\alpha}{1-\epsilon} (1-\epsilon)|B|=\alpha|B|
\end{split}
\]
contradicting the choice of $B$. On the other hand, if for all $C \in \mathcal{C}_m$ the right hand side inequality of \eqref{e.goodcube} failed we would get
\[
\begin{split}
 |B\cap E| & \geq \sum_{C \in \mathcal{C}_m} |C\cap E|> \frac{\alpha}{1-\epsilon} \sum_{C \in \mathcal{C}_m}|C| =\frac{\alpha}{1-\epsilon} \big|\bigcup_{C\in\mathcal{C}_m} C\big|
\\
& \geq \frac{\alpha}{1-\epsilon} (1-\epsilon)|B|=\alpha|B|
\end{split}
\]
which also contradicts our hypotheses on $B$. Thus there exist dyadic cubes $R_1,R_2\in  \mathcal{C}_m$ such that
\[
 \frac{1}{|R_1|}\int_{R_1} \chi_E \geq\frac{\alpha-\epsilon}{1-\epsilon}\quad\text{and}\quad\frac{1}{|R_2|}\int_{R_2} \chi_E \leq \frac{\alpha}{1-\epsilon}.
\]
As $R_1$ can be mapped onto $R_2$, inside $B$, by a continuous rigid motion and using the intermediate value theorem we conclude that there exists a cube $R\subseteq B$ with $|R|=|R_1|=|R_2| = 2^{-mn}$ such that\eqref{e.goodcube} holds.

For $\delta\in(0,1-\frac{\alpha}{1-\epsilon}]$ we now perform a Calder\'on-Zygmund decomposition of $\chi_{E\cap R}$ with respect to  the dyadic grid generated by $R$, at level $1-\delta$, to get the Calder\'on-Zygmund cubes $\{R_j\}_j$, with $E\cap R \subseteq \cup_j R_j$ a.e., $|R_j\cap E|/|R_j|>1-\delta$, and the cubes $R_j$ being maximal with respect to this property. For every $j$ we now have $R_j ^{(1)} \subseteq R.$ Indeed, if not, then $R$ would be itself a Calder\'on-Zygmund cube. However we have $|R\cap E|/|R|\leq \alpha/(1-\epsilon)\leq1-\delta$.

Let $\{R_{j_k} ^{(1)}\}_k$ be maximal among the parents $R_j ^{(1)}$. We have
\[
 \big|\bigcup_k R_{j_k}\big| \geq \frac{1}{2^n}\big|\bigcup_j R_ j \big|.
\]
By the continuity of the Lebesgue measure we can find, for each $k$, a cube $S_k$ with
\[
 R_{j_k}\subsetneq S_k \subseteq R_{j_k} ^{(1)}\quad\text{and}\quad \frac{|S_k\cap E|}{|S_k|}=1-\delta.
\]
Note that the $S_k$'s are a.e. pairwise disjoint and for each $k$ we have $S_k\subseteq \mathcal H_{\operatorname{S},1-\eta}(E)$ for every $\eta\in(\delta,1)$. Thus for such $\eta$ we get
\[
\begin{split}
 |\left(\mathcal{H}_{\operatorname{S}, 1 - \eta}(E) \setminus E\right) \cap R| &\geq \sum_k |\left(\mathcal{H}_{\operatorname{S}, 1 - \eta}(E) \setminus E\right) \cap S_k|  \geq \frac{\delta}{  2^n}|E \cap R|.
\end{split}
\]
Accordingly
\[
\begin{split}
|(\mathcal{H}_{\operatorname{S}, 1 -  \eta}(E) \setminus E) \cap B|  &\geq\frac{\delta}{  2^n}|E \cap R|  \geq \frac{\delta}{  2^n}\frac{\alpha - \epsilon}{1 - \epsilon}|R| \gtrsim_n\delta\epsilon^n \frac{\alpha - \epsilon}{1 - \epsilon}|B|.
\end{split}
\]
Hence
\[
B \subseteq \mathcal{H}_{\operatorname{HL}, b, \alpha + C_n  \epsilon^n  \delta \frac{\alpha - \epsilon}{1 - \epsilon}}(\mathcal{H}_{\operatorname{S}, 1 - \eta}(E))
\]
for some dimensional constant $C_n>0$. We conclude that
\[
C_{\operatorname{HL},b}(\alpha) \leq C_{ \operatorname{HL},b}\Big(\alpha + C_n  \epsilon^n \delta \frac{\alpha - \epsilon}{1 - \epsilon}\Big)C_{\operatorname S}(1 -\eta).
\]
Finally, using the continuity results from \cite{bh} and letting $\eta\to\delta^+$ we get that for all $\delta\leq 1-\frac{\alpha}{1-\epsilon}$ we have
\[
C_{\operatorname{HL},b}(\alpha) \leq C_{ \operatorname{HL},b}\Big(\alpha + C_n  \epsilon^n \delta \frac{\alpha - \epsilon}{1 - \epsilon}\Big)C_{\operatorname S}(1 -  \delta).
\]
At this point we set $\epsilon\coloneqq \frac{1}{2}\min(\alpha,1-\alpha)$. Then for all $\alpha\in(0,1)$ we have $\frac{\alpha-\epsilon}{1-\epsilon}\gtrsim \alpha $ and
\[
1-\frac{\alpha}{1 -\epsilon}=\begin{cases} \frac{2-3\alpha}{2-\alpha},\quad &\alpha\leq \frac{1}{2} ,
\\
\frac{1-\alpha}{1+\alpha},\quad &\alpha>\frac{1}{2} .\end{cases}
\]
From this we readily see that our estimates are valid for all $\delta\lesssim 1-\alpha$. Using the fact that $C_{\operatorname{HL},b}(\alpha)$ is non-increasing in $\alpha$ we can summarize our result to the estimate
\begin{align}\label{e.hlb}
 C_{\operatorname{HL},b}(\alpha) \leq C_{ \operatorname{HL},b}\Big(\alpha (1+c_n\min(\alpha,1-\alpha)^{n} \delta)\big)C_{\operatorname S}(1 -  \delta),\quad \delta\lesssim  1-\alpha ,	
\end{align}
for some dimensional constant $c_n>0$.
The proof of the statement $C_{\operatorname{HL},b}\in C^\frac{1}{n}(0,1)$ is completed by using the Solyanik estimate $C_{\operatorname S}(\alpha)-1\sim_n (1-\alpha)^\frac{1}{n}$ as $\alpha\to 1^-$ as in the proof of Lemma~\ref{l.l1}.

By the definition of the halo function the argument above implies that $\phi_{\operatorname{HL}, b} \in C^{1/n}(1,\infty)$.  However, to yield H\"older continuity of $\phi_{\operatorname{HL},b}$ at $1$ we must use the Solyanik estimates for $C_{\operatorname{HL},b}$ itself , not the Solyanik estimates for $C_{\operatorname S}$ that we were able to use above in order to find smoothness estimates on $C_{\operatorname {HL},b}$ on compact subsets of $(0,1)$.  By Theorem \ref{t.t1} we have that $C_{\operatorname{HL},b}(\alpha) - 1 \lesssim_n (1-\alpha)^{1/ n+1 }$ as $\alpha\to 1^-$. This result combined with the Solyanik estimate for $C_{\operatorname S}$ and  estimate \eqref{e.hlb} above gives
\[
\phi_{\operatorname{HL}, b} \in C^{1/ n+1 } ([0,\infty)) ,
\]
as we wanted.
\end{proof}

\section{Bases of Convex Sets}\label{s.convex}
We now show that, if $\mathcal{B}$ is a homothecy invariant density basis consisting of convex sets which satisfies a Solyanik estimate of the form $C_{\mathcal{B}}(\alpha) - 1 \lesssim_{\mathcal B}(1 - \alpha)^p$ for $\alpha$ sufficiently close to 1, then $\phi_{\mathcal{B}} \in C^{p} ([0,\infty)) $.  We need some preliminary results. A schematic for the lemmas that follow and their corresponding proofs is contained in Figure~\ref{f.schematic}.

The following technical lemma uses a classical lemma of Fritz John \cite{fj} and will help us reduce the study of regularity estimates for the sharp Tauberian constants associated with bases of convex sets to estimates concerning rectangles in $\mathbb R^n$. We will refer to it as the Fritz John lemma.

\begin{lem}
Let $\Lambda\subseteq \mathbb R^n$ be a bounded convex set in $\mathbb{R}^n$. Then there exists a rectangular parallelepiped $R_\Lambda\subseteq\mathbb R^n$ such that
\[
 R_{\Lambda} \subseteq \Lambda \subseteq n^\frac{3}{2} R_{\Lambda}.
\]
\end{lem}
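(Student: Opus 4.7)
The plan is to invoke Fritz John's theorem on the maximal volume inscribed ellipsoid, which says that for any bounded convex body $\Lambda \subset \mathbb{R}^n$ (with nonempty interior; the degenerate case is trivial) there exists an ellipsoid $E$, centered at some point $c$ with principal semi-axes $a_1,\dots,a_n$, such that
\[
E \subseteq \Lambda \subseteq c + n(E-c).
\]
Granted this, the proof becomes a linear-algebra computation: I will inscribe a rectangular parallelepiped in $E$, aligned with the principal axes of $E$, and check that its $n^{3/2}$-dilate circumscribes the outer ellipsoid $c + n(E-c)$, hence contains $\Lambda$.

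First, I would choose $R_\Lambda$ to be the centered rectangular parallelepiped with sides parallel to the principal axes of $E$ and side lengths $\frac{2 a_i}{\sqrt{n}}$. To see $R_\Lambda \subseteq E$, I would parametrize $R_\Lambda$ as the set of points $c + \sum t_i v_i$ with $|t_i|\le a_i/\sqrt{n}$ and $v_i$ the principal unit axes of $E$; then $\sum (t_i/a_i)^2 \le n\cdot \frac{1}{n}=1$, which is precisely the defining inequality of $E$. Consequently $R_\Lambda \subseteq E \subseteq \Lambda$, giving the left-hand inclusion.

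Next, for the right-hand inclusion, I would dilate $R_\Lambda$ by the factor $n^{3/2}$ about its center $c$; the resulting box $n^{3/2} R_\Lambda$ has side lengths $n^{3/2}\cdot \frac{2a_i}{\sqrt{n}} = 2n a_i$. This is exactly the axis-aligned bounding box of the ellipsoid $c+n(E-c)$, whose semi-axes are $n a_i$. Hence $c+n(E-c)\subseteq n^{3/2} R_\Lambda$, and combining with Fritz John gives $\Lambda \subseteq n^{3/2} R_\Lambda$, as desired.

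The main (and really only) obstacle is locating and citing Fritz John's theorem correctly, and being precise about the convention that $n^{3/2}R_\Lambda$ denotes the dilation of $R_\Lambda$ by factor $n^{3/2}$ about its center (so that it remains a rectangular parallelepiped with the same orientation as $R_\Lambda$); with that convention fixed, the computation of side lengths is immediate. I would also briefly remark on the degenerate case in which $\Lambda$ has empty interior: then $\Lambda$ lies in a hyperplane and the claim holds trivially (or can be obtained as a limit of nondegenerate convex sets, since the constant $n^{3/2}$ is stable under approximation).
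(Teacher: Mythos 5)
Your proof is correct and takes essentially the same route as the paper: both invoke Fritz John's inscribed-ellipsoid theorem and then sandwich a rectangular parallelepiped aligned with the principal axes between the inner ellipsoid and the $n$-fold dilate, using the elementary fact that a box and its $\sqrt{n}$-dilate can be inscribed and circumscribed about an ellipsoid. The paper phrases the $\sqrt{n}$ factor by shrinking the circumscribing box, whereas you inscribe a box directly in $E$; these are the same computation.
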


\begin{proof}
Let $\Lambda$ be a bounded convex set in $\mathbb{R}^n$.  As was proven by Fritz John in \cite{fj}, $\Lambda$ must contain an ellipsoid $\mathcal{E}_{\Lambda}$ such that
\[
\mathcal{E}_{\Lambda} \subseteq \Lambda \subseteq n \mathcal{E}_{\Lambda}.
 \]
Here the dilation $n\mathcal{E}_\Lambda$ is taken with respect to the center of the ellipsoid. Let $S_{\Lambda}$ be a rectangular parallelepiped of minimal volume containing $\mathcal{E}_{\Lambda}.$  By elementary geometry, we have $\mathcal{E}_{\Lambda} \subseteq S_{\Lambda} \subseteq n^{1/2}\mathcal{E}_{\Lambda}$ and hence
\[
n^{-\frac{1}{2}}S_{\Lambda} \subseteq 	\mathcal{E}_{\Lambda} \subseteq \Lambda \subseteq n\mathcal E_\Lambda \subseteq n S_\Lambda.
\]
The desired rectangle is now given as $R_\Lambda \coloneqq n^{-\frac{1}{2}}S_\Lambda$.
\end{proof}
We proceed with a simple geometric lemma that quantifies the measure theoretic approximation of a convex set by finite unions of dyadic cubes.
\begin{lem}\label{l.l2}
Let $\Lambda\subseteq \mathbb R^n$ be a convex set such that, letting $Q \coloneqq [0,1]^n$ and $cQ$ the $c$-fold concentric dilate of $Q$, we have $Q \subset \Lambda \subset n^{3/2}Q$.  For every $\epsilon \in(0,1)$ there exists a collection of a.e. disjoint dyadic cubes $\{C_j\}_j$ contained in $\Lambda$, each of sidelength  $\sim_n \epsilon$, so that $|\Lambda \setminus \cup_j C_j| < \epsilon|\Lambda|$.
\end{lem}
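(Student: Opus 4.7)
The plan is to tile $\mathbb{R}^n$ by dyadic cubes of sidelength $\sim_n \epsilon$, retain those that lie inside $\Lambda$, and control the leftover by exploiting that $\Lambda$ is convex with bounded diameter: the leftover will be contained in a thin inner collar around $\partial \Lambda$ whose measure is governed by the surface area of $\partial \Lambda$.

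First I would fix $m \in \mathbb{N}$ so that $2^{-m} \leq c_n \epsilon < 2^{-m+1}$ for a small dimensional constant $c_n$ to be chosen at the end, and let $\{C_j\}_j$ denote the collection of all closed dyadic cubes of sidelength $2^{-m}$ that are contained in $\Lambda$. These cubes are a.e.\ disjoint and each has sidelength $\sim_n \epsilon$, so the only remaining point is the measure estimate $|\Lambda \setminus \bigcup_j C_j| < \epsilon|\Lambda|$. Any point $x \in \Lambda \setminus \bigcup_j C_j$ belongs to a dyadic cube of sidelength $2^{-m}$ that meets $\mathbb{R}^n \setminus \Lambda$, and so $\mathrm{dist}(x, \partial \Lambda) \leq \sqrt{n}\, 2^{-m}$. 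Thus $\Lambda \setminus \bigcup_j C_j$ is contained in the inner collar
\[
U_\delta \coloneqq \{x \in \Lambda : \mathrm{dist}(x, \partial \Lambda) < \delta\}, \qquad \delta \coloneqq \sqrt{n}\, 2^{-m}.
\]

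The core step is then to bound $|U_\delta|$ by convex geometry. I would invoke the standard fact that for a convex body $\Lambda$ and $\delta$ not larger than its inradius,
\[
|U_\delta| \leq C_n\, \delta\, \mathcal{H}^{n-1}(\partial \Lambda),
\]
which follows for instance from the Steiner formula applied to $\Lambda \ominus \delta B$. Monotonicity of perimeter for nested convex sets gives $\mathcal{H}^{n-1}(\partial \Lambda) \leq \mathcal{H}^{n-1}(\partial (n^{3/2} Q)) \sim_n 1$, so $|\Lambda \setminus \bigcup_j C_j| \leq C'_n\, 2^{-m} \leq C'_n c_n \epsilon$. Since $|\Lambda| \geq |Q| = 1$, picking $c_n$ with $C'_n c_n < 1$ yields $|\Lambda \setminus \bigcup_j C_j| < \epsilon |\Lambda|$ as required.

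The main obstacle is only the clean formulation of the collar-volume bound; everything else is bookkeeping. If one wishes to avoid Steiner entirely, an elementary substitute is to cover $\partial \Lambda$ by $\lesssim_n \delta^{-(n-1)}$ Euclidean balls of radius $\delta$, using the bound $\mathcal{H}^{n-1}(\partial \Lambda) \lesssim_n 1$ from above together with a Vitali-type packing inside $n^{3/2} Q$; each such ball contributes at most $C_n \delta^n$ to $|U_\delta|$, giving the same conclusion.
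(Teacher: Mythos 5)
Your proof is correct and follows essentially the same approach as the paper: both select the dyadic cubes of sidelength $\sim_n \epsilon$ contained in $\Lambda$, observe that the leftover set lies in the inner collar of width $\sqrt{n}\,2^{-m}$ around $\partial\Lambda$, and bound that collar's measure by $\lesssim_n 2^{-m}$ via convexity, finishing since $|\Lambda|\sim_n 1$. The only difference is cosmetic: you make explicit (through the Steiner-type estimate and monotonicity of perimeter for nested convex bodies) the collar-volume bound that the paper compresses into the phrase ``using the convexity of $\Lambda$.''
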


\begin{proof}
Let $\mathcal{C}_m$ denote the collection of dyadic cubes of sidelength $2^{-m}$, where $m$ is a non-negative integer, which are contained in $\Lambda$.  Suppose $x \in S$ and $\operatorname{dist}(x, \partial \Lambda) > \sqrt{n}2^{-m}$.   Then there exists a dyadic cube $C\in \mathcal C_m$ with $C\ni x$.  Using the convexity of $\Lambda$ we conclude that
\[
 \Big|\Lambda\setminus \bigcup_{C\in\mathcal C_m} \Big|\leq |\{x\in\Lambda:\, \operatorname{dist}(x,\partial \Lambda)\leq \sqrt{n} 2^{-m}\}|\lesssim_n 2^{-m}\sim_n 2^{-m}|\Lambda|.
\]
Choosing $2^{-m}\sim_n \epsilon$ proves the lemma.
\end{proof}
We proceed by showing that if the average of $\chi_E$ with respect to some convex set $\Lambda$ is equal to $\alpha$ then $\chi_E$ must have average close to $\alpha$ with respect to some cube $R\subseteq \Lambda$, where we also have a control on the measure of the cube $R$.
\begin{lem}\label{l.l3}
Let $\alpha \in(0,1)$ and $0<\epsilon<\min(\alpha,1-\alpha)$ and let $\Lambda\subset \mathbb R^n$ be a convex set satisfying $Q \subset \Lambda \subset n^{3/2}Q$. Suppose that $E\subset \mathbb R^n$ is a measurable set of finite measure for which $\frac{1}{|\Lambda|}\int_\Lambda \chi_E= \alpha$. Then there exist a cube $ R \subseteq \Lambda$ with $|R|\sim_n \epsilon^n$ such that
\[
\frac{\alpha - \epsilon}{1 -\epsilon} \leq \frac{1}{|R|}\int_{R}\chi_E \leq \frac{\alpha}{1 -\epsilon}.
\]
\end{lem}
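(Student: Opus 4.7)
The plan is to adapt the argument used for balls in the proof of Theorem~\ref{t.balls} to an arbitrary bounded convex set, with Lemma~\ref{l.l2} playing the role of the elementary packing of a ball by small dyadic cubes. First, I would apply Lemma~\ref{l.l2} to $\Lambda$ with parameter $\epsilon$, producing an a.e.\ disjoint family $\{C_j\}_j$ of dyadic cubes contained in $\Lambda$, each of common sidelength $s=2^{-m}\sim_n\epsilon$, with $|\Lambda\setminus\bigcup_j C_j|<\epsilon|\Lambda|$, so that $|\bigcup_j C_j|\geq(1-\epsilon)|\Lambda|$.

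Next, I would extract two cubes from this family: one with $E$-density at least $(\alpha-\epsilon)/(1-\epsilon)$ and one with $E$-density at most $\alpha/(1-\epsilon)$. This is a double contradiction argument identical in structure to the one in the balls case. If every $C_j$ had $|C_j\cap E|/|C_j|<(\alpha-\epsilon)/(1-\epsilon)$, then estimating $|\Lambda\cap E|$ as a sum over the $C_j$'s plus the uncovered portion of mass below $\epsilon|\Lambda|$ yields $|\Lambda\cap E|<\alpha|\Lambda|$; if every $C_j$ had $|C_j\cap E|/|C_j|>\alpha/(1-\epsilon)$, the bound $\sum_j|C_j\cap E|>\alpha|\Lambda|$ forces $|\Lambda\cap E|>\alpha|\Lambda|$. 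Both contradict the hypothesis $\frac{1}{|\Lambda|}\int_\Lambda\chi_E=\alpha$. The two cubes obtained have measure $s^n\sim_n\epsilon^n$ as required.

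The last step is an intermediate value argument connecting these two cubes. The key observation is that the set of centers of axis-parallel cubes of sidelength $s$ contained in $\Lambda$, namely
\[
K_s\coloneqq\bigcap_{v\in[-s/2,s/2]^n}(\Lambda-v),
\]
is an intersection of convex sets and hence convex. The centers of both distinguished cubes lie in $K_s$, so the straight-line homotopy between them stays in $K_s$, producing a continuous one-parameter family $R(t)\subseteq\Lambda$ of axis-parallel cubes of fixed sidelength $s$. Since $t\mapsto|R(t)\cap E|/|R(t)|$ is continuous, and its values at $t=0$ and $t=1$ straddle the interval $[(\alpha-\epsilon)/(1-\epsilon),\,\alpha/(1-\epsilon)]$, the intermediate value theorem produces a cube $R=R(t^*)$ with $\frac{1}{|R|}\int_R\chi_E$ in the prescribed range.

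The main obstacle is precisely this final interpolation: for a ball one can move a cube by a rigid motion of the ambient ball, but for a general convex set one must ensure that a continuous deformation keeps the cube inside $\Lambda$ while preserving its sidelength. The convexity of $K_s$ removes this difficulty, and the remaining computations are parallel to those already carried out in Theorem~\ref{t.balls}.
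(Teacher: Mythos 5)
Your proposal is correct and follows essentially the same route as the paper: Lemma~\ref{l.l2} supplies the dyadic packing, the same double-contradiction argument produces the two extreme cubes $R_1,R_2$, and the intermediate value theorem bridges them. Your formulation of the final step via the convex set $K_s=\bigcap_{v\in[-s/2,s/2]^n}(\Lambda-v)$ of admissible cube centers is a cleaner and more explicit way of expressing what the paper justifies tersely by noting that the convex hull of $R_1\cup R_2$ lies in $\Lambda$, but it is the same idea.
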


\begin{proof} Let $\{C_j\}_j$ be the collection of dyadic cubes provided by Lemma~\ref{l.l2}. We have that $C_j\subseteq \Lambda$ for every $j$, $|C_j|\sim_n \epsilon^n$ and $|\Lambda\setminus \cup_j C_j|\leq \epsilon |\Lambda|$. Arguing as in the proof of Theorem~\ref{t.balls} we see that there exist dyadic cubes $R_1,R_2 \in\{C_j\}_j$ such that
\[
 \frac{1}{|R_1|}\int_{R_1}\chi_E \geq \frac{\alpha-\epsilon}{1 -\epsilon}\quad\text{and}\quad \frac{1}{|R_2|}\int_{R_2}\chi_E \leq \frac{\alpha}{1 -\epsilon}.
\]
As the convex hull of the union of any two $C_j$'s lies in $\Lambda$, by the intermediate value theorem we see there exists a cube $R \subset \Lambda$ of measure $\sim_n\epsilon^n$ that satisfies the conclusion of the lemma.
\end{proof}
We will need to work with a smaller cube inside the one provided by the previous lemma. The following lemma summarizes the technical details of this construction.
\begin{lem}\label{l.rin} Let $\alpha\in(0,1)$, $0<\epsilon<\min(\alpha,1-\alpha)$, and $R$ be the cube provided by Lemma~\ref{l.l3}. There exists a cube $R^{\textup{in}}\subsetneq R$ with $|R|\sim_n |R^\textup{in}|$ and
	\[
	\frac{1}{2}\frac{\alpha-\epsilon}{1-\epsilon}	\leq \frac{1}{|R^\textup{in}|}\int_{R^\textup{in}}\chi_E \leq \frac{1}{2}\Big(1+\frac{\alpha}{1-\epsilon}\Big)\;.
	\]
\end{lem}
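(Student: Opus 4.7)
My plan begins with a small but decisive observation: the target interval already contains the average $\beta \coloneqq \frac{1}{|R|}\int_R \chi_E$ provided by Lemma~\ref{l.l3}. Indeed, its lower endpoint $\tfrac{1}{2}\tfrac{\alpha-\epsilon}{1-\epsilon}$ lies strictly below $\beta \geq \tfrac{\alpha-\epsilon}{1-\epsilon}$, while its upper endpoint $\tfrac{1}{2}(1+\tfrac{\alpha}{1-\epsilon})$ exceeds $\beta \leq \tfrac{\alpha}{1-\epsilon}$ because the hypothesis $\epsilon < 1-\alpha$ forces $\tfrac{\alpha}{1-\epsilon} < 1$. Hence the lemma will follow once I produce a cube $R^{\textup{in}} \subsetneq R$ with $|R^{\textup{in}}| \sim_n |R|$ whose $\chi_E$-average is exactly $\beta$.

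To construct such a cube I would let $s$ denote the sidelength of $R$ and consider the family $\mathcal{F}$ of all cubes of sidelength $s/2$ contained in $R$. Parametrized by the position of a fixed reference corner, $\mathcal{F}$ is homeomorphic to a closed cube in $\mathbb{R}^n$, hence connected, and the averaging functional $Q \mapsto \frac{1}{|Q|}\int_Q \chi_E$ is continuous on this parameter space by the standard continuity of translation in $L^1$. Among the members of $\mathcal{F}$ sit the $2^n$ dyadic children of $R$; their averages $a_1,\dots,a_{2^n}$ satisfy $\frac{1}{2^n}\sum_i a_i = \beta$, so $\min_i a_i \leq \beta \leq \max_i a_i$. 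Applying the intermediate value theorem along any continuous path in $\mathcal{F}$ joining a child that realizes the minimum to one that realizes the maximum then yields an $R^{\textup{in}} \in \mathcal{F}$ with $\frac{1}{|R^{\textup{in}}|}\int_{R^{\textup{in}}} \chi_E = \beta$. Since $|R^{\textup{in}}| = 2^{-n}|R|$ and $R^{\textup{in}} \subsetneq R$ (the sidelengths differ), this cube will satisfy all of the required properties.

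I foresee no genuine obstacle beyond the initial observation that recasts the asymmetric target bounds as the single assertion $\beta \in [\tfrac{1}{2}\tfrac{\alpha-\epsilon}{1-\epsilon},\, \tfrac{1}{2}(1+\tfrac{\alpha}{1-\epsilon})]$; once that reduction is in hand, the remainder is a textbook intermediate value argument on a connected family of cubes, and the required continuity of the averaging functional is immediate from $\chi_E \in L^1$. The only point worth watching is that the ratio $|R^{\textup{in}}|/|R| = 2^{-n}$ is dimensional, so any later quantitative application of Lemma~\ref{l.rin} will carry this dimensional factor through.
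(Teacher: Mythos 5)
Your proof is correct for the lemma as literally stated, and it takes a genuinely different route from the paper's. The paper instead fixes $R^{\textup{in}} \coloneqq t_o R$, the \emph{concentric} $t_o$-fold dilate of $R$, and chooses $t_o\in(0,1)$ explicitly so that the crude dilation estimates
\[
1-t_o^{-n}\Big[1-\tfrac{\alpha-\epsilon}{1-\epsilon}\Big] \leq \frac{1}{|t_oR |}\int_{t_oR}\chi_E \leq t_o^{-n}\tfrac{\alpha}{1-\epsilon}
\]
land inside the target interval; this yields $t_o^{-n}\sim 1/\max(\alpha,1-\alpha)\sim 1$, hence $|R^{\textup{in}}|\sim_n|R|$. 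Your approach, by contrast, first observes that the average $\beta$ over $R$ itself already lies strictly inside the target interval, then extracts a translate of a half-sized cube attaining the value $\beta$ via the intermediate value theorem on a connected family of cubes. Both arguments are sound, and yours is arguably slicker as a proof of the bare statement.

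However, there is a consequential difference worth flagging. The paper's proof establishes more than the statement records: the cube it produces is concentric with $R$, and the proof of Theorem~\ref{t.t4} relies on this. In Case~I there, the implication ``$4n^{3/2}R_{j_o}$ escapes $R$ $\Rightarrow$ $R_{j_o}$ is large'' is derived from the fact that $R^{\textup{in}}=t_oR$ is separated from $\partial R$ by a uniform gap of width $\tfrac{1-t_o}{2}\operatorname{side}(R)$ in every direction; this is what forces $4n^{3/2}\operatorname{side}(R_{j_o}) \gtrsim (t_o^{-1}-1)\operatorname{side}(R^{\textup{in}})$. Your $R^{\textup{in}}$, produced by an IVT argument over translates, carries no positional control: it can abut $\partial R$, in which case arbitrarily small $R_{j_o}\subset R^{\textup{in}}$ near the common face would still have $4n^{3/2}R_{j_o}$ escape $R$, and that lower bound would fail. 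So while your argument does prove the lemma as written, it proves a weaker fact than the one the paper actually uses, and the statement of the lemma is slightly under-specified on this point; if you wanted to substitute your construction you would need to add control of the position of $R^{\textup{in}}$ inside $R$ (e.g., by restricting to the connected family of half-sized cubes concentric with, or at a definite distance from, $\partial R$ — but then the IVT input would need to be re-derived).
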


\begin{proof} For $t\in (0,1)$ we have
	\[
	1-t^{-n}\Big[1-\frac{\alpha-\epsilon}{1-\epsilon}\Big] \leq \frac{1}{|tR |}\int_{tR}\chi_E \leq t^{-n}\frac{\alpha}{1-\epsilon}\;.
	\]
We now choose $t_o$ by setting
\[
t_o^{-n}\coloneqq  \min \Big( \big(1+\frac{1}{2}(\frac{1-\epsilon}{\alpha}-1)\big),\frac{1-\epsilon}{1-\alpha}-\frac{1}{2}\frac{\alpha-\epsilon}{1-\alpha}\Big)
\]
and define $R^\text{in}\coloneqq t_o R $. We have that $t_o>0$ whenever $\alpha,\epsilon\in(0,1)$ while the restriction $\epsilon<\min(\alpha,1-\alpha)$ guarantees that $t_o<1$. The fact that $|R^\text{in}\cap E|/|R^\text{in}|$ satisfies the desired inequalities follows immediately by the definition of $t_o$. Furthermore we can easily estimate
\[
 t_o ^{-n} = \frac{1}{2} \min \Big(\frac{1-\epsilon+\alpha}{\alpha},\frac{2-\epsilon-\alpha}{1-\alpha}  \Big)\sim \frac{1}{\max(\alpha,1-\alpha)}
\]
and thus $t_o\sim_n \max(\alpha,1-\alpha)^\frac{1}{n}$. This gives $|R^\text{in} |\sim_n |R|$ as we wanted.
\end{proof}
The following theorem constitutes the heart of the matter in this paper regarding the embedding of halo sets associated with convex bases.

\begin{thm}\label{t.t4}  Let $\mathcal B$ be a homothecy invariant basis consisting of convex sets. For every $\alpha \in(0,1)$ and every measurable set $E\subset \mathbb R^n$ of finite measure we have
\[
\mathcal H_{\mathcal B,\alpha}(E)\subseteq \mathcal{H}_{\mathcal{B}, \alpha(1 +c_n\min(\alpha,1-\alpha)^{2n} \delta)}(\mathcal{H}_{\mathcal{B}, 1 - 3n^{3n/2} \delta }(E)),
\]
for all $\delta\lesssim_n 1-\alpha$. Here $c_n>0$ is a numerical constant that depends only upon the dimension.
\end{thm}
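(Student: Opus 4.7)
The strategy parallels Theorems \ref{t.t2} and \ref{t.balls}, with the Fritz John lemma providing the mechanism to handle arbitrary convex shapes. Given $x \in \mathcal{H}_{\mathcal{B},\alpha}(E)$, use homothecy invariance and the continuity of Lebesgue measure to produce $\Lambda \in \mathcal{B}$ containing $x$ with $|\Lambda \cap E|/|\Lambda| = \alpha$. The Fritz John lemma yields a rectangle $R_\Lambda$ with $R_\Lambda \subseteq \Lambda \subseteq n^{3/2} R_\Lambda$; applying the invertible linear transformation that carries $R_\Lambda$ to the unit cube $Q$ normalizes the situation to $Q \subseteq \Lambda \subseteq n^{3/2} Q$. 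Linear transformations commute with scalar dilations, so the transformed basis is again a homothecy invariant basis of convex sets and all density ratios are preserved.

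In this normalized setup, choose $\epsilon \sim_n \min(\alpha,1-\alpha)^2$ and invoke Lemma \ref{l.l3} and Lemma \ref{l.rin} to extract a cube $R^{\text{in}} \subseteq \Lambda$ of volume $|R^{\text{in}}| \sim_n \max(\alpha,1-\alpha)\epsilon^n$ whose $E$-density is bounded above and below away from $1$. Run a Calder\'on--Zygmund decomposition of $\chi_{R^{\text{in}} \cap E}$ at level $1 - 3\delta$ with respect to the dyadic grid generated by $R^{\text{in}}$; combining maximal-parent selection with the intermediate value theorem exactly as in Theorems \ref{t.t2} and \ref{t.balls} produces pairwise disjoint cubes $S_k \subseteq R^{\text{in}}$ with $|S_k \cap E|/|S_k| = 1 - 3\delta$ and total measure $\sum_k |S_k| \gtrsim_n \alpha \max(\alpha,1-\alpha)\epsilon^n$. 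The hypothesis $\delta \lesssim_n 1-\alpha$ enters here to ensure $1 - 3\delta$ exceeds the upper bound on the $E$-density of $R^{\text{in}}$ so that the decomposition is nontrivial.

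Now invoke convexity: for each $S_k$, define $\Lambda_k \in \mathcal{B}$ to be the homothet of $\Lambda$ whose Fritz John rectangle equals the cube $S_k/n^{3/2}$ placed concentrically inside $S_k$. Then $\Lambda_k \subseteq n^{3/2}(S_k/n^{3/2}) = S_k$ and $|\Lambda_k| \geq |S_k|/n^{3n/2}$, so
\[
\frac{|\Lambda_k \cap E|}{|\Lambda_k|} \geq 1 - \frac{|S_k \setminus E|}{|\Lambda_k|} \geq 1 - \frac{3\delta\,|S_k|}{|S_k|/n^{3n/2}} = 1 - 3n^{3n/2}\delta,
\]
which places $\Lambda_k \subseteq \mathcal{H}_{\mathcal{B},1 - 3n^{3n/2}\delta}(E)$. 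The main obstacle is to aggregate the disjoint family $\{\Lambda_k\}$ (together with the a.e.\ inclusion $E \subseteq \mathcal{H}_{\mathcal{B},1-3n^{3n/2}\delta}(E)$) into the strict density improvement
\[
\frac{|\mathcal{H}_{\mathcal{B},1-3n^{3n/2}\delta}(E) \cap \Lambda|}{|\Lambda|} > \alpha\bigl(1 + c_n \min(\alpha,1-\alpha)^{2n}\delta\bigr)
\]
on $\Lambda$: since each $\Lambda_k$ is highly dense in $E$, a naive lower bound on $|\Lambda_k \setminus E|$ is insufficient, so the bookkeeping must combine the volume contribution $\sum_k |\Lambda_k| \gtrsim_n \sum_k |S_k|/n^{3n/2}$ with the Calder\'on--Zygmund missing mass $\sum_k |S_k \setminus E| = 3\delta \sum_k |S_k|$ to produce the claimed factor; the scale $\epsilon \sim \min(\alpha,1-\alpha)^2$ is precisely what matches the resulting $\min(\alpha,1-\alpha)^{2n}$ power. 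Undoing the affine transformation transfers the embedding back to the original basis $\mathcal{B}$.
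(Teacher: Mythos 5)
Your high-level plan (Fritz John normalization, Lemma \ref{l.l3}/\ref{l.rin} to find a good cube, Calder\'on--Zygmund inside it, then pass back to homothets of $\Lambda$) is the right skeleton, and it is what the paper does. But the crucial step --- turning the disjoint family $\{\Lambda_k\}$ into a quantitative mass gain for the halo set --- is exactly where your argument breaks, and the fix is not a bookkeeping refinement but a different construction that you do not supply.

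The problem with putting $\Lambda_k$ \emph{inside} $S_k$: you obtain only a \emph{lower} bound $|\Lambda_k\cap E|/|\Lambda_k| \geq 1 - 3n^{3n/2}\delta$. Nothing prevents $\Lambda_k \subseteq E$, i.e.\ $|\Lambda_k\setminus E| = 0$. The quantity you need to bound from below is
\[
\bigl|(\mathcal H_{\mathcal B,1-3n^{3n/2}\delta}(E)\setminus E)\cap\Lambda\bigr| \;\geq\; \sum_k |\Lambda_k\setminus E|,
\]
and your upper estimate $\sum_k |S_k\setminus E| = 3\delta\sum_k|S_k|$ does not help: $S_k\setminus E$ lives in $S_k$, not necessarily in $\Lambda_k\subsetneq S_k$, so it can fall entirely outside $\Lambda_k$. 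You correctly sense the obstacle (``a naive lower bound on $|\Lambda_k\setminus E|$ is insufficient'') but then assert it can be fixed by ``combining'' the two quantities, without saying how. This is the gap.

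The paper's proof closes it by building a \emph{pair} of nested homothets $\Lambda_k^{\text{small}}\subseteq\Lambda_k^{\text{big}}$ around each selected cube: $\Lambda_k^{\text{small}}$ inside $R_{j_k}$ (density $>1-n^{3n/2}\delta$) and $\Lambda_k^{\text{big}}$ containing the \emph{parent} $R_{j_k}^{(1)}$, whose density is $\leq 1-\delta$ by the stopping rule, forcing $\Lambda_k^{\text{big}}$ to have density $\leq 1 - n^{-3n/2}\delta$. An intermediate value argument between the two gives a $\Lambda_k$ whose density lies in a fixed \emph{window} around $1-\delta$, which is what ensures $|\Lambda_k\setminus E|\gtrsim_n\delta|\Lambda_k|$. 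The price is that $\Lambda_k\subseteq\Lambda_k^{\text{big}}\subseteq 4n^{3/2}R_{j_k}$ is now larger than $S_k$, so disjointness is no longer automatic; the paper restores it with a Vitali selection on the dilates $\{4n^{3/2}R_j\}$ --- and this only works when all those dilates fit inside $R$, which forces a separate \emph{Case I} for the situation where some $4n^{3/2}R_{j_o}$ escapes $R$. That case is entirely absent from your proposal, and it is actually where the $\min(\alpha,1-\alpha)^{2n}$ exponent comes from: in Case I the single escaping cube is already size-comparable to $R^{\text{in}}$ and the factor $t_o^n(t_o^{-1}-1)^n\sim_n\min(\alpha,1-\alpha)^n$ appears alongside $\epsilon^n\sim_n\min(\alpha,1-\alpha)^n$ with the paper's choice $\epsilon=\tfrac12\min(\alpha,1-\alpha)$. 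Your claim that $\epsilon\sim\min(\alpha,1-\alpha)^2$ is ``precisely what matches'' the $2n$ power is therefore not the mechanism in the actual argument, and without Case I the exponent $2n$ is not accounted for.
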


\begin{figure}[htb]
\centering
 \def\svgwidth{0.93\textwidth}
 \hspace*{0.85cm}
\begingroup%
  \makeatletter%
  \providecommand\color[2][]{%
    \errmessage{(Inkscape) Color is used for the text in Inkscape, but the package 'color.sty' is not loaded}%
    \renewcommand\color[2][]{}%
  }%
  \providecommand\transparent[1]{%
    \errmessage{(Inkscape) Transparency is used (non-zero) for the text in Inkscape, but the package 'transparent.sty' is not loaded}%
    \renewcommand\transparent[1]{}%
  }%
  \providecommand\rotatebox[2]{#2}%
  \ifx\svgwidth\undefined%
    \setlength{\unitlength}{214.96117024bp}%
    \ifx\svgscale\undefined%
      \relax%
    \else%
      \setlength{\unitlength}{\unitlength * \real{\svgscale}}%
    \fi%
  \else%
    \setlength{\unitlength}{\svgwidth}%
  \fi%
  \global\let\svgwidth\undefined%
  \global\let\svgscale\undefined%
  \makeatother%
  \begin{picture}(1,0.58530249)%
    \put(0,0){\includegraphics[width=\unitlength]{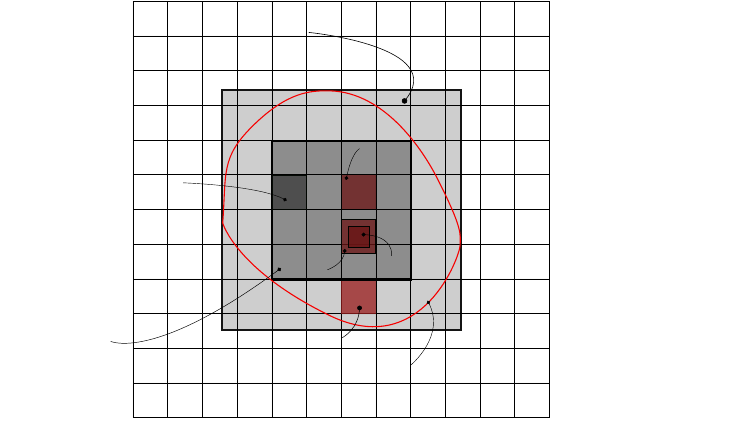}}%
    \put(-0.00250772,0.32139434){\color[rgb]{0,0,0}\makebox(0,0)[lb]{\smash{One of the cubes $C_j$}}}%
    \put(0.36450806,0.55137194){\color[rgb]{0,0,0}\makebox(0,0)[lb]{\smash{$n^{\frac{3}{2}}Q$}}}%
    \put(0.12758913,0.11840823){\color[rgb]{0,0,0}\makebox(0,0)[lb]{\smash{$Q$}}}%
    \put(0.52646988,0.08802234){\color[rgb]{0,0,0}\makebox(0,0)[lb]{\smash{$\Lambda$}}}%
    \put(0.50792865,0.22239208){\color[rgb]{0,0,0}\makebox(0,0)[lb]{\smash{$R ^\text{in}$}}}%
    \put(0.48194387,0.36939537){\color[rgb]{0,0,0}\makebox(0,0)[lb]{\smash{$R_1$}}}%
    \put(0.42698368,0.11552889){\color[rgb]{0,0,0}\makebox(0,0)[lb]{\smash{$R_2$}}}%
    \put(0.41422395,0.21441735){\color[rgb]{0,0,0}\makebox(0,0)[lb]{\smash{$R$}}}%
  \end{picture}%
\endgroup%

\caption[Schematic for the proofs of \S\ref{s.convex}]{Schematic for the proofs of \S\ref{s.convex}}\label{f.schematic}
\end{figure}

\begin{proof} Suppose that $E \subset \mathbb{R}^n$ is of finite measure and $\alpha\in(0,1)$.  Let $x\in\mathcal H_{\mathcal B,\alpha}(E)$ and consider a convex set $\tilde\Lambda\in \mathcal B$ such that $x\in \tilde \Lambda$ and
\[
\frac{1}{|\tilde \Lambda|}\int_{\tilde \Lambda}\chi_E>\alpha.
\]
By considering a homothecy $\Lambda$ of $\tilde{\Lambda}$ satisfying $\Lambda\supseteq \tilde \Lambda\ni x$ we have
\begin{equation}\label{e.avg}
\frac{1}{|\Lambda|}\int_{\Lambda}\chi_E=\alpha.
\end{equation}
Using the F. John lemma we can find a rectangular parallelepiped $Q_\Lambda\subset \mathbb R ^n$ such that $Q_{\Lambda}\subseteq \Lambda \subseteq n^\frac{3}{2}
Q_{\Lambda}$. Finally, by invariance under affine transformations, we can map $Q_{\Lambda}$ onto the unit cube $Q=[0,1]^n$ through a bijective linear  transformation
reducing the problem to the case that $\Lambda$ satisfies \eqref{e.avg} and $Q\subseteq \Lambda \subseteq n^\frac{3}{2} Q$, as in Lemma~\ref{l.l3}. It thus suffices
to show that
\[
\Lambda \subseteq \mathcal{H}_{\mathcal{B}_\Lambda, \alpha(1 +c_n\min(\alpha,1-\alpha)^{2n} \delta)}(\mathcal{H}_{\mathcal{B}_{\Lambda}, 1 - 3 n^{3n/2} \delta }(E)),
\]
where $\mathcal B_\Lambda$ is the basis consisting of all the homothecies of $\Lambda$.

With these notations and reductions taken as understood we now set $\epsilon\coloneqq \frac{1}{2}\min(\alpha,1-\alpha)$	 and apply Lemma~\ref{l.l3} to get a cube $R\subset \Lambda$ with $|R|\sim_n \epsilon ^n$ and such that
\begin{equation}\label{e.Ravg}
\frac{\alpha-\epsilon}{1-\epsilon}\leq \frac{1}{|R|}\int_R \chi_E \leq \frac{\alpha}{1-\alpha}.
\end{equation}

For technical reasons we will have to consider the smaller cube $R^\text{in}=t_o R$ provided by Lemma~\ref{l.rin}, where we remember that $t_o\sim_n\max(\alpha,1-\alpha)^\frac{1}{n}$. A schematic associated to the relationships among $R^\text{in}$, $R$, $Q$, and $\Lambda$ is indicated in Figure~\ref{f.schematic}.  Proceeding as in the proof of Theorem~\ref{t.balls}, for $0<\delta \leq \frac{1}{2}\min(1- \frac{\alpha}{1-\epsilon},\frac{1}{3}n^{-\frac{3n}{2}}(1-\alpha))$ we perform a Calder\'on-Zygmund decomposition of $\chi_{E\cap R^\text{in}}$ with respect to the dyadic grid generated by $R^\text{in}$, at level $1-\delta$. This results in the collection of Calder\'on-Zygmund cubes $\{R_j\}_j$ for which $E\cap R^\text{in}\subseteq \cup_j R_j$ a.e.,
\[
\frac{1}{|R_j|}\int_{R_j} \chi_E >1-\delta,
\]
and the cubes $R_j$ are maximal with respect to this property in the dyadic grid generated by $R^\text{in}$. Note that
\[
\frac{1}{|R^\text{in}|}\int_{R^\text{in}} \chi_E \leq \frac{1}{2}+\frac{1}{2}\frac{\alpha}{1-\epsilon}\leq 1-\delta
\]
so the cube $R^\text{in}$ is not itself a Calder\'on-Zygmund cube and thus, for all $j$, $R_j\subsetneq R^\text{in}$ and $R_j ^{(1)}\subseteq R^\text{in}$. We will use these properties in what follows without particular mention.

The rest of the proof is divided into two complementary cases:

\noindent{Case I: for some $j_o$ we have $|4n^{3/2}R_{j_o} \setminus R| > 0$.}

In this case, letting $\operatorname{side}(S)$ denote the sidelength of a cube $S$, we have that
\[
4n^{3/2}\operatorname{side}(R_{j_o}) > \frac{\frac{1}{t_o}\operatorname{side}(R^\text{in}) - \operatorname{side}(R^\text{in})}{2},
\]
implying
\[
|R_{j_o}| \gtrsim_n |R^\text{in}|(t_o^{-1} - 1)^n.
\]
Assuming that $R_{j_o}\eqqcolon T_{j_o} (Q)$ for some homothecy $T_{j_o}$ we set $\Lambda_{j_o} ^\text{small}\coloneqq n^{-\frac{3}{2}}T_{j_o}(\Lambda)$, the dilation taken here with respect to the center of $R_{j_o}$. Then
\[
n^{-\frac{3}{2}}R_{j_o} \subseteq \Lambda_{j_o} ^\text{small} \subseteq R_{j_o}\quad\text{and}\quad |R_{j_o}|\leq n^\frac{3n}{2}|\Lambda_{j_o} ^\text{small}|.
\]
Remembering that $|R_{j_o}\cap E|/|R_{j_o}|>1-\delta$ we can thus calculate
\[
\frac{1}{|\Lambda_{j_o} ^\text{small}|}\int_{\Lambda_{j_o} ^\text{small}}\chi_E  \geq \frac{|\Lambda_{j_o} ^\text{small}| - |\Lambda_{j_o} ^\text{small}\setminus E|}{|\Lambda_{j_o} ^\text{small}|}\geq 1-\frac{|R_{j_o}\setminus E|}{|\Lambda_{j_o} ^\text{small}|}\geq 1- n^\frac{3n}{2}\delta.
\]
On the other hand, since we have that $|\Lambda\cap E|/|\Lambda |=\alpha \leq 1- 6n^\frac{3n}{2}\delta$, we can conclude that there exists a homothecy $\Lambda_{j_o}$ of $\Lambda$ such that
\[
\Lambda_{j_o} ^\text{small} \subseteq \Lambda_{j_o}\subseteq \Lambda\quad \text{and}\quad \frac{1}{|\Lambda_{j_o}|}\int_{\Lambda_{j_o}} \chi_E= 1 - 2 n^{\frac{3n}{2}}\delta .
\]
The measure of $\Lambda_{j_o}$ can be estimated from below as follows:
\[
|\Lambda_{j_o}|\geq |\Lambda_{j_o} ^\text{small}|\geq n^{-\frac{3n}{2}}|R_{j_o}| \gtrsim_n (t_o ^{-1}-1)^n |R^\text{in}|= t_o ^n(t_o ^{-1}-1)^n |R|.
\]
Note that $\Lambda_{j_o} \subset \mathcal{H}_{\mathcal{B}, 1 - 3  n^{3n/2}\delta}(E)$. We conclude that
\[
\begin{split}
|(\mathcal{H}_{\mathcal B_{\Lambda}, 1 - 3  n^{3n/2}\delta }(E) \setminus E) \cap \Lambda |& \geq |(\mathcal{H}_{\mathcal B_{\Lambda}, 1 - 3  n^{3n/2}\delta }(E) \setminus E) \cap \Lambda_{j_o} |
\\
&=|\Lambda_{j_o}\setminus E|\gtrsim_n \delta|\Lambda_{j_o}|\gtrsim_n t_o ^n(t_o ^{-1}-1)^n\epsilon^n \delta |\Lambda|.
\end{split}
\]
Remembering the definitions of $\epsilon$ and $t_o$ and using Lemma~\ref{l.rin} it is not hard to obtain the estimate $t_o ^n(t_o ^{-1}-1)^n \sim_n \min(\alpha,1-\alpha)^n$ for $\alpha\in(0,1)$.

In this case we have thus proved
\[
\Lambda \subseteq \mathcal{H}_{\mathcal{B}_\Lambda, \alpha + c_n\min(\alpha,1-\alpha)^{2n}\delta}(\mathcal{H}_{\mathcal{B}_\Lambda, 1 - 3  n^{3n/2}\delta}(E)).
\]

\noindent{{Case II: for every $j$ we have $4n^{3/2}R_j \subset R$.}}

In this case we apply the Vitali covering lemma to the collection $\{4n^{3/2}R_j\}_j$ resulting in a subcollection $\{R_{j_k}\}_k \subset \{R_j\}_j$ such that the cubes in $\{4n^{3/2} R_{j_k}\}_k$ are a.e. pairwise disjoint and $|\cup_k 4n^{3/2}R_{j_k}| \gtrsim_n |\cup_j 4n^{3/2}R_j|$. Thus
\[
\Big|\bigcup_k R_{j_k}\Big| \sim_n  \Big|\bigcup_k 4n^{3/2} R_{j_k}\Big| \gtrsim_n \Big |\bigcup_j 4n^{3/2}R_j\Big|\geq \Big|\bigcup_j R_j \Big|\geq |E\cap R^\text{in}|.
\]
By Lemma~\ref{l.rin} the measure $|E\cap R^\text{in}|$ can be estimated from below by
\[
|E\cap R^\text{in}| \geq\frac{1}{2}\frac{\alpha-\epsilon}{1-\epsilon} |R^\text{in}|\gtrsim_n \frac{\alpha-\epsilon}{1-\epsilon}|R|\gtrsim_n \frac{\alpha-\epsilon}{1-\epsilon}\epsilon^n |\Lambda|.
\]
We thus have showed that
\begin{equation}\label{e.interrin}
\Big|\bigcup_k R_{j_k}\Big|\gtrsim_n \frac{\alpha-\epsilon}{1-\epsilon}\epsilon^n |\Lambda|.
\end{equation}

Now if $R_{j_k} \eqqcolon T_k (Q)$ for some homothecy $T_k$ we define $\Lambda_k ^{\text{small}}\coloneqq n^{-\frac{3}{2}}T_k(\Lambda)$; the dilation is considered here with respect to the center of the cube $R_{j_k}$. Furthermore, if $R_{j_k} ^{(1)}\eqqcolon T_k ^{(1)} (Q)$ for some homothecy $T_k ^{(1)}$ we define $\Lambda_k ^{\text{big}}\coloneqq T_k ^{(1)}(\Lambda)$. These definitions imply
\[
\begin{split}
 n^{-\frac{3}{2}}R_{j_k} \subseteq \Lambda_k ^{\text{small}} \subseteq R_{j_k} \quad &\text{and}\quad |\Lambda_k ^\text{small}|\geq n^{-\frac{3n}{2}}|R_{j_k}|,
\\
 R_{j_k} ^{(1)} \subseteq \Lambda_k ^{\text{big}} \subseteq n^{\frac{3}{2}} R_{j_k} ^{(1)}\subseteq 4n^\frac{3}{2}R_{j_k}\subseteq R \quad &\text{and}\quad |R_{j_k} ^{(1)}|\geq n^{-\frac{3n}{2}}|\Lambda_k ^\text{big}|.
\end{split}
\]
Using the fact that $|R_j\cap E|/|R_j|>1-\delta$ and the estimates resulting from the definitions above we can estimate
\[
\begin{split}
\frac{1}{|\Lambda_k ^\text{small}|}\int_{\Lambda_k ^\text{small}}\chi_E  &=\frac{|\Lambda_k ^\text{small}| - |\Lambda_k ^\text{small}\setminus E|}{|\Lambda_k ^\text{small}|} >	\frac{|\Lambda_k ^\text{small}| -|R_{j_k}\setminus E| }{|\Lambda_k ^\text{small}|}
\\
&\geq  \frac{|\Lambda_k ^\text{small}|- \delta|R_{j_k}| }{|\Lambda_k ^\text{small}| }\geq 1 -  n^{3n/2}\delta.
\end{split}
\]
For the parent cubes $R_{j_k} ^{(1)}$ we have $|R_{j_k} ^{(1)}\cap E|/|R_{j_k} ^{(1)}|\leq 1-\delta$ by the Calder\'on-Zygmund decomposition. Thus for $\Lambda_k ^\text{big}$ we can estimate
\[
\begin{split}
\frac{1}{|\Lambda_k ^\text{big}|}\int_{\Lambda_k ^\text{big} } \chi_E &= \frac{|\Lambda_k ^\text{big}| - |\Lambda_k ^\text{big}\setminus E|}{|\Lambda_k ^\text{big}|} \leq \frac{|\Lambda_k ^\text{big}|-|R_{j_k} ^{(1)}\setminus E|}{|\Lambda_k ^\text{big}|}
\\
&\leq \frac{|\Lambda_k ^\text{big}|- \delta |R_{j_k} ^{(1)}|}{|\Lambda_k ^\text{big}|} \leq 1 -  n^{-3n/2}\delta.
\end{split}
\]
So for every $k$ there exists a homothecy $\Lambda_k$ of $\Lambda$ such that
\[
 \Lambda_k ^\text{small}\subseteq \Lambda_k \subseteq \Lambda_k ^\text{big}  \subseteq  R\subseteq \Lambda
\]
and
\[
1 - \delta n^{3n/2}< \frac{1}{|\Lambda_k|}\int_{\Lambda_k} \chi_E \leq 1 -  n^{-3n/2}\delta.
\]
We get
\[
|\left(\mathcal{H}_{\mathcal{B}_\Lambda, 1 - \delta n^{3n/2}}(E) \setminus E\right) \cap \Lambda_k | \gtrsim_n \delta |\Lambda_k|.
\]
Note that for each $k$ we have $n^{-\frac{3}{2}}R_{j_k}\subseteq \Lambda_k\subseteq 4n^\frac{3}{2}R_{j_k}$ so the $\Lambda_k$'s are a.e. pairwise disjoint in $R$ and $|\Lambda_k|\sim_n |R_{j_k}|$. We can therefore estimate
\[
\begin{split}
|(\mathcal{H}_{\mathcal{B}_\Lambda, 1 - \delta n^{3n/2}}(E) \setminus E) \cap \Lambda | &\gtrsim_n \delta \sum_k|\Lambda_k|\sim_n \delta \sum_k |R_{j_k}| \gtrsim_n \delta \epsilon^n \frac{\alpha-\epsilon}{1-\epsilon}|\Lambda|
\end{split}
\]
where in the last estimate we used \eqref{e.interrin}.

Since $\frac{\alpha-\epsilon}{1-\epsilon}\gtrsim  \alpha $ for $\alpha\in(0,1)$ we thus have
\[
\Lambda \subseteq \mathcal{H}_{\mathcal{B}_\Lambda, \alpha +c_n\min(\alpha,1-\alpha)^n\alpha \delta}(\mathcal{H}_{\mathcal{B}_\Lambda, 1 -  n^{3n/2}\delta}(E)).
\]
for a dimensional constant $c_n>0$.

The two complementary cases studied above imply that for all $\delta\lesssim_n 1-\alpha$ we have
\[
\mathcal H_{\mathcal B _\Lambda	,\alpha}(E)\subseteq \mathcal{H}_{\mathcal{B} _\Lambda, \alpha(1 +c_n\min(\alpha,1-\alpha)^{2n}  \delta)}(\mathcal{H}_{\mathcal{B}_\Lambda, 1 - 3 n^{3n/2}\delta}(E)),
\]
where $c_n>0$ is a numerical constant depending only on the dimension.
\end{proof}
By an argument along the lines of the proof of Lemma \ref{l.l1} we then have the following.
\begin{thm}
Let $\mathcal{B}$ be a homothecy invariant density basis of convex sets in $\mathbb{R}^n$.  Suppose $C_{\mathcal{B}}(\alpha) \lesssim_{\mathcal B} (1 - \alpha)^p$ holds for $\alpha$ sufficiently close to $1$ and for some fixed $0 < p < 1$. Then
\[
C_{\mathcal{B}}  \in C^p(0,1)
\]
and consequently
\[
\phi_{\mathcal{B}} \in C^p([0,\infty)).
\]
\end{thm}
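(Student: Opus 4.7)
The plan is to run exactly the argument of Lemma~\ref{l.l1}, but with Theorem~\ref{t.t4} replacing Corollary~\ref{cor1}. Taking measures in the inclusion of Theorem~\ref{t.t4} and taking the supremum over measurable sets $E$ of finite positive measure, one first converts the halo-set embedding into the multiplicative inequality
\[
C_{\mathcal{B}}(\alpha)\leq C_{\mathcal{B}}\bigl(\alpha(1+c_n\min(\alpha,1-\alpha)^{2n}\delta)\bigr)\,C_{\mathcal{B}}(1-3n^{3n/2}\delta),
\]
valid for all $\delta\lesssim_n 1-\alpha$ (the letting $\eta\to\delta^+$ step of \cite{bh} is implicit here, just as in Corollary~\ref{cor1}). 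The hypothesis gives $C_{\mathcal{B}}(1-3n^{3n/2}\delta)-1\lesssim_{\mathcal{B}}\delta^p$ for $\delta$ small enough.

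Next, I would fix a compact $K\subset(0,1)$ and $m_K,M_K$ with $m_K\leq x\leq M_K$ for $x\in K$. Since $K$ is compact, $\min(x,1-x)\geq \min(m_K,1-M_K)=:\mu_K>0$ on $K$. For $x,y\in K$ with $0<y-x$ sufficiently small, I would choose $\delta$ so that $\alpha(1+c_n\min(\alpha,1-\alpha)^{2n}\delta)=y$ with $\alpha=x$, i.e.
\[
\delta=\frac{y-x}{c_n\,x\min(x,1-x)^{2n}}\lesssim_K y-x,
\]
which, taking $y-x$ small enough, satisfies both $\delta\lesssim_n 1-x$ and $1-3n^{3n/2}\delta>\alpha_o$ (the threshold in the hypothesis). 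Then the displayed multiplicative estimate combined with $\sup_K C_{\mathcal{B}}<\infty$ (continuity of $C_{\mathcal{B}}$ from \cite{bh}) yields
\[
C_{\mathcal{B}}(x)-C_{\mathcal{B}}(y)\leq C_{\mathcal{B}}(y)\bigl[C_{\mathcal{B}}(1-3n^{3n/2}\delta)-1\bigr]\lesssim_{\mathcal{B},K}\delta^p\lesssim_K (y-x)^p.
\]
For $x,y\in K$ with $y-x$ not small, the H\"older bound is trivial from boundedness. This gives $C_{\mathcal{B}}\in C^p(0,1)$.

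For the halo function, I would split $[0,\infty)$ into three regions. On $[0,1]$, $\phi_{\mathcal{B}}(\alpha)=\alpha$ is Lipschitz and hence in $C^p$ on any compact. On $(1,\infty)$, $\phi_{\mathcal{B}}(x)=C_{\mathcal{B}}(1/x)$ and since $x\mapsto 1/x$ maps compact subsets of $(1,\infty)$ to compact subsets of $(0,1)$ via a Lipschitz map, $\phi_{\mathcal{B}}\in C^p(1,\infty)$ follows from $C_{\mathcal{B}}\in C^p(0,1)$. The only subtle point is the junction at $\alpha=1$: a compact $K\subset[0,\infty)$ containing $1$ in its interior requires joint control. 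Here one uses the Solyanik estimate directly: for $x\in K$ with $x>1$ close to $1$,
\[
|\phi_{\mathcal{B}}(x)-1|=C_{\mathcal{B}}(1/x)-1\lesssim_{\mathcal{B}}(1-1/x)^p\lesssim_K (x-1)^p,
\]
while for $x\leq 1$, $|\phi_{\mathcal{B}}(x)-1|=|x-1|\leq|x-1|^p$ up to a $K$-dependent constant. Combining with the Lipschitz behaviour on $[0,1]$ and the $C^p$ behaviour on $(1,\infty)$ via the triangle inequality (inserting $\phi_{\mathcal{B}}(1)=1$) gives $\phi_{\mathcal{B}}\in C^p([0,\infty))$.

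The main obstacle is bookkeeping rather than conceptual: one must verify that the weights $\min(\alpha,1-\alpha)^{2n}$ and $\alpha$ appearing in Theorem~\ref{t.t4} do not degrade the H\"older exponent. This is precisely why the estimate is stated only for \emph{local} H\"older continuity, and why restricting to compact $K\subset(0,1)$, on which these factors are bounded below, absorbs them harmlessly into a $K$-dependent constant. The junction at $\alpha=1$ requires the extra input of the Solyanik estimate itself, exactly as in the corresponding step of Theorem~\ref{t.balls}.
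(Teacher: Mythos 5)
Your argument matches the paper's intended approach: run the proof of Lemma~\ref{l.l1} with Theorem~\ref{t.t4} substituted for Corollary~\ref{cor1}. The conversion of the halo-set embedding into the multiplicative inequality, the choice of $\delta$, and the conclusion $C_{\mathcal B}\in C^p(0,1)$ are all handled correctly and mirror the paper.

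The weak point is the junction at $\alpha=1$, and the gap there is real. The triangle inequality through $\phi_{\mathcal B}(1)=1$ disposes of the case $x<1<y$, but a compact $K\subset[0,\infty)$ with $1$ in its interior also contains pairs $1<x<y$ both arbitrarily close to $1$, and these are not covered by $C^p(1,\infty)$ (which only controls compact subsets of the \emph{open} interval) nor by the one-sided Solyanik bound $\phi_{\mathcal B}(y)-1\lesssim(y-1)^p$, since for $p<1$ that estimate together with monotonicity does not imply $\phi_{\mathcal B}(y)-\phi_{\mathcal B}(x)\lesssim(y-x)^p$. What is actually needed is that the H\"older constant of $C_{\mathcal B}$ on $[m,M]$ remain bounded as $M\to 1^-$. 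For the rectangular case this is automatic: Corollary~\ref{cor1} has the fixed factor $\delta/2^n$, so solving $\alpha(1+\delta/2^n)=y$ gives $\delta\sim(y-x)$ and $C_{\mathcal B}(x)-C_{\mathcal B}(y)\lesssim(y-x)^p$ whenever $y-x\lesssim 1-x$, with the Solyanik estimate covering the complementary regime $y-x\gtrsim 1-x$. But in Theorem~\ref{t.t4} the shift factor carries $\min(\alpha,1-\alpha)^{2n}$, and the corresponding one-step bound degrades to $(y-x)^p/(1-x)^{2np}$, which blows up as $x\to 1^-$. Iterating the multiplicative inequality with $\delta\sim(1-\alpha)$ at each step does not repair this, since the per-step gain in $\alpha$ is only of size $(1-\alpha)^{2n+1}$ while the per-step cost is $(1-\alpha)^p$. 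So for the convex case a genuinely separate estimate is required near $1^-$, which your sketch does not supply; the paper's one-line proof (``by an argument along the lines of Lemma~\ref{l.l1}'') glosses over the same point, so you are not deviating from the source, but the step should not be presented as routine.
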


\begin{remark}  It is quite possible that we should be able to have improved smoothness results for $C_{\mathcal{B}}(\alpha)$ for $0 < \alpha < 1$, especially in the cases that the maximal operator $M_{\mathcal{B}}$ is the Hardy-Littlewood or strong maximal operator.   Arguments along these lines would have to be substantially more sophisticated than what we have provided here, however, as the known Solyanik estimates for the strong maximal operator are known to be sharp; see \cite{hp13}.  This is a subject of ongoing research.
\end{remark}

 \begin{bibsection}
 \begin{biblist}

\bib{bh}{article}{
   author={Beznosova, Oleksandra},
   author={Hagelstein, Paul Alton},
   title={Continuity of halo functions associated to homothecy invariant
   density bases},
   journal={Colloq. Math.},
   volume={134},
   date={2014},
   number={2},
   pages={235--243},
   issn={0010-1354},
   review={\MR{3194408}},
}

\bib{busemannfeller1934}{article}{
author = {Busemann, H.},
author = {Feller, W.},
journal = {Fundamenta Mathematicae},
language = {ger},
number = {1},
pages = {226-256},
publisher = {Institute of Mathematics Polish Academy of Sciences},
title = {Zur Differentiation der Lebesgueschen Integrale},
url = {http://eudml.org/doc/212688},
volume = {22},
year = {1934},
}	

\bib{cab}{article}{
   author={Cabrelli, Carlos},
   author={Lacey, Michael T.},
   author={Molter, Ursula},
   author={Pipher, Jill C.},
   title={Variations on the theme of Journ\'e's lemma},
   journal={Houston J. Math.},
   volume={32},
   date={2006},
   number={3},
   pages={833--861},
   issn={0362-1588},
   review={\MR{2247912 (2007e:42011)}},
}

\bib{CorF}{article}{
   author={C\'ordoba, A.},
   author={Fefferman, R.},
   title={On the equivalence between the boundedness of certain classes of
   maximal and multiplier operators in Fourier analysis},
   journal={Proc. Nat. Acad. Sci. U.S.A.},
   volume={74},
   date={1977},
   number={2},
   pages={423--425},
   issn={0027-8424},
   review={\MR{0433117 (55 \#6096)}},
}

\bib{laceyferg2002}{article}{
   author={Ferguson, Sarah H.},
   author={Lacey, Michael T.},
   title={A characterization of product BMO by commutators},
   journal={Acta Math.},
   volume={189},
   date={2002},
   number={2},
   pages={143--160},
   issn={0001-5962},
   review={\MR{1961195 (2004e:42026)}},
}

\bib{gilbargtrudinger}{book}{
   author={Gilbarg, David},
   author={Trudinger, Neil S.},
   title={Elliptic partial differential equations of second order},
   series={Grundlehren der Mathematischen Wissenschaften [Fundamental
   Principles of Mathematical Sciences]},
   volume={224},
   edition={2},
   publisher={Springer-Verlag, Berlin},
   date={1983},
   pages={xiii+513},
   isbn={3-540-13025-X},
   review={\MR{737190 (86c:35035)}},
}

\bib{Gu}{article}{
   author={de Guzm{\'a}n, Miguel},
   title={Differentiation of integrals in ${\bf R}^{n}$},
   conference={
      title={Measure theory},
      address={Proc. Conf., Oberwolfach},
      date={1975},
   },
   book={
      publisher={Springer},
      place={Berlin},
   },
   date={1976},
   pages={181--185. Lecture Notes in Math., Vol. 541},
   review={\MR{0476978 (57 \#16523)}},
}

\bib{hlp}{article}{
		Author = {Hagelstein, P. A.},
		Author = {Luque, T.},
		Author = {Parissis, I.},
		Eprint = {1304.1015},
		Title = {Tauberian conditions, Muckenhoupt weights, and differentiation properties of weighted bases},
		Url = {http://arxiv.org/abs/1304.1015},
		journal={Trans. Amer. Math. Soc.},
		Year = {to appear}}

\bib{hp13}{article}{
			Author = {Hagelstein, Paul A.},
			Author = {Parissis, Ioannis},
			Title = {Solyanik estimates in harmonic analysis},
			journal={Springer Proc. Math. Stat.},
            volume={108},
            date={2014},
            pages={87--103},
            review={\MR{3297656}}
            }

\bib{hp14}{article}{
			Author = {Hagelstein, Paul A.},
			Author = {Parissis, Ioannis},
			Eprint = {1405.6631},
			Title = {Weighted Solyanik Estimates for the Hardy-Littlewood maximal operator and embedding of $A_\infty$ into $A_p$},
			Url = {http://arxiv.org/abs/1405.6631},
			journal={J. Geom. Anal.},
			year={to appear},
			}
			
\bib{hs2}{article}{
   author={Hagelstein, Paul Alton},
   author={Stokolos, Alexander},
   title={An extension of the C\'ordoba-Fefferman theorem on the equivalence
   between the boundedness of certain classes of maximal and multiplier
   operators},
   language={English, with English and French summaries},
   journal={C. R. Math. Acad. Sci. Paris},
   volume={346},
   date={2008},
   number={19-20},
   pages={1063--1065},
   issn={1631-073X},
   review={\MR{2462049 (2010a:42064)}},
}

\bib{hs}{article}{
   author={Hagelstein, Paul Alton} ,
   author={Stokolos, Alexander} ,
   title={Tauberian conditions for geometric maximal operators},
   journal={Trans. Amer. Math. Soc.},
   volume={361},
   date={2009},
   number={6},
   pages={3031--3040},
   issn={0002-9947},
   review={\MR{2485416 (2010b:42023)}},
}

\bib{hardylittlewood}{article}{
   author={Hardy, G. H.},
   author={Littlewood, J. E.},
   title={A maximal theorem with function-theoretic applications},
   journal={Acta Math.},
   volume={54},
   date={1930},
   number={1},
   pages={81--116},
   issn={0001-5962},
   review={\MR{1555303}},
}

\bib{Hayes}{article}{
   author={Hayes, C. A., Jr.},
   title={A condition of halo type for the differentiation of classes of
   integrals},
   journal={Canad. J. Math.},
   volume={18},
   date={1966},
   pages={1015--1023},
   issn={0008-414X},
   review={\MR{0199318 (33 \#7466)}},
}

\bib{fj}{article}{
   author={John, Fritz},
   title={Extremum problems with inequalities as subsidiary conditions},
   conference={
      title={Studies and Essays Presented to R. Courant on his 60th
      Birthday, January 8, 1948},
   },
   book={
      publisher={Interscience Publishers, Inc., New York, N. Y.},
   },
   date={1948},
   pages={187--204},
   review={\MR{0030135 (10,719b)}},
}

\bib{laceyterwilleger}{article}{
   author={Lacey, Michael },
   author={Terwilleger, Erin},
   title={Hankel operators in several complex variables and product BMO},
   journal={Houston J. Math.},
   volume={35},
   date={2009},
   number={1},
   pages={159--183},
   issn={0362-1588},
   review={\MR{2491875 (2010c:47071)}},
}

\bib{solyanik}{article}{
   author={Solyanik, A. A.},
   title={On halo functions for differentiation bases},
   language={Russian, with Russian summary},
   journal={Mat. Zametki},
   volume={54},
   date={1993},
   number={6},
   pages={82--89, 160},
   issn={0025-567X},
   translation={
      journal={Math. Notes},
      volume={54},
      date={1993},
      number={5-6},
      pages={1241--1245 (1994)},
      issn={0001-4346},
   },
   review={\MR{1268374 (95g:42033)}},
}

\bib{soria}{article}{
   author={Soria, Fernando},
   title={Note on differentiation of integrals and the halo conjecture},
   journal={Studia Math.},
   volume={81},
   date={1985},
   number={1},
   pages={29--36},
   issn={0039-3223},
   review={\MR{818168 (87j:42058)}},
}

 \bib{SI}{book}{
   author={Stein, Elias M.},
   title={Singular integrals and differentiability properties of functions},
   series={Princeton Mathematical Series, No. 30},
   publisher={Princeton University Press},
   place={Princeton, N.J.},
   date={1970},
   pages={xiv+290},
   review={\MR{0290095 (44 \#7280)}},
}

\end{biblist}
\end{bibsection}
\end{document}